\newtheorem{thm}{Theorem}[section]
\newtheorem{lemma}[thm]{Lemma}
\newtheorem{prop}[thm]{Proposition}
\newtheorem{cor}[thm]{Corollary}
\theoremstyle{remark}
\newtheorem{rem}[thm]{Remark}
\newtheorem{ex}[thm]{Example}
\theoremstyle{definition}
\newtheorem{defn}[thm]{Definition}
\newtheoremstyle{Claim}{}{}{\itshape}{}{\itshape\bfseries}{:}{ }{#1}
\theoremstyle{Claim}
\newcommand{\R}{\mathbb{R}}
\newcommand{\He}{\mathbb{H}}
\newcommand{\eps}{\varepsilon}
\newcommand{\al}{\alpha}
\newcommand{\Tr}{\text{Tr}}
\newcommand{\Om}{\Omega}
\DeclareMathOperator{\Prop}{Prop}
\DeclareMathOperator{\sign}{sign}
\DeclareMathOperator{\LSC}{LSC}
\DeclareMathOperator{\USC}{USC}
\newcommand{\Sym}{\mathcal{S}}
\theoremstyle{plain}
\def\sideremark#1{\ifvmode\leavevmode\fi\vadjust{
\vbox to0pt{\hbox to 0pt{\hskip\hsize\hskip1em
\vbox{\hsize3cm\tiny\raggedright\pretolerance10000
\noindent #1\hfill}\hss}\vbox to8pt{\vfil}\vss}}}
\begin{document}

\title[Strong maximum  principles]{New strong maximum and comparison principles\\ for fully nonlinear degenerate elliptic PDEs}

\author{Martino Bardi}
\author{Alessandro Goffi} 

\date{\today}
\subjclass[2010]{Primary: 35B50,35J70,35J60; Secondary: 49L25,35H20.}
\keywords{Fully nonlinear equation, degenerate elliptic equation, subunit vector, H\"ormander condition, strong maximum principle, Hopf boundary lemma, strong comparison principle
.}
 \thanks{
 The authors are members of the Gruppo Nazionale per l'Analisi Matematica, la Probabilit\`a e le loro Applicazioni (GNAMPA) of the Istituto Nazionale di Alta Matematica (INdAM). The first-named author was partially supported by the research projects  ``Mean-Field Games and Nonlinear PDEs'' of the University of Padova, and  ``Nonlinear Partial Differential Equations: Asymptotic Problems and Mean-Field Games" of the Fondazione CaRiPaRo. 
 The second-named author wishes to thank the Department of Mathematics of the University of Padova for the 
  hospitality during the preparation of this paper. }
\address{Department of Mathematics "T. Levi-Civita", University of Padova, Via Trieste 63, 35121 Padova, Italy} \email{bardi@math.unipd.it}
\address{Gran Sasso Science Institute, Viale Francesco Crispi 7, 67100 L'Aquila, Italy } \email{alessandro.goffi@gssi.it}

\maketitle
\begin{abstract}
We introduce a notion of subunit vector field for fully nonlinear degenerate elliptic equations. We prove that an interior maximum of a viscosity subsolution of such an equation propagates along the trajectories of subunit vector fields. This implies strong maximum and minimum principles when the operator has a family of subunit vector fields satisfying the H\"ormander condition. In particular these results hold for a large class of nonlinear subelliptic PDEs in Carnot groups. We prove also a strong comparison principle for degenerate elliptic equations that can be written in Hamilton-Jacobi-Bellman form, such as those involving the Pucci's extremal operators over H\"ormander vector fields.
\end{abstract}
\tableofcontents

\section{Introduction}
\label{intro}
In this note we investigate the validity of Strong Maximum Principles (briefly, SMP) 
and some Strong Comparison Principles for semicontinuous viscosity subsolutions and supersolutions of fully nonlinear second order PDEs
\begin{equation}
\label{1}
F(x,u,Du,D^{2}u)=0 \quad \text{ in }\Om\ ,
\end{equation}
where $F:\overline{\Om}\times\R^d\times(\R^d\backslash\{0\})\times\Sym_d\to \R$, $\Om$ is an open connected set of $\R^d$ and $\Sym_d$ is the set of $d\times d$ symmetric matrices. 
Our basic assumptions are
\begin{itemize}
\item[(i)] $F$ is \emph{lower semicontinuous} and \emph{proper} in the sense of \cite{CIL}, i.e.
\begin{equation*}
F(x,r,p,X)\leq F(x,s,p,Y)\ ,\quad\text{if } r\leq s\ ,\; Y\leq X\ ;
\end{equation*}
\item [(ii)] (\textit{Scaling}) for some $\phi : (0,1]\to (0,+\infty)
$, $F$ 
satisfies
\begin{equation*}
F(x,\xi s,\xi p,\xi X)\geq\ \phi(\xi)F(x,s,p,X)
\end{equation*}
for all $\xi\in(0,1]$, 
 $s\in[-1,0]$, 
  $x\in\Omega$, $p\in\R^d\backslash\{0\}$, and $X\in\Sym_d$;
\end{itemize}
where $Y\leq X$ means that $X-Y$ is nonnegative semidefinite, the usual ordering  in $\Sym_d$.
Moreover we assume that the operator $F$ is non-degenerate elliptic in the direction of some rank-one matrices identified by the 
 next definition.
\begin{defn}\label{subunit}  $Z\in \R^{d}$ is a generalized \emph{subunit vector} (briefly, SV)
 for $F$ at $x\in\Omega$ if 
\begin{equation*}
\sup_{\gamma>0} F(x,0,p,I-\gamma p\otimes p)> 0 \quad \forall p\in \R^{d} \;\text{ such that } \; Z\cdot p\ne 0 ;
\end{equation*}
$Z:\Omega\rightarrow\R^{d}$ is a \emph{subunit vector field} (briefly, SVF)
if $Z(x)$ is SV for $F$ at $x$ for every $x\in\Omega$. 
\end{defn}
The name is motivated by the the notion introduced  by Fefferman and Phong \cite {FP} for linear operators 
\begin{equation}
\label{lin}
F(x,D^2 u(x)):=-\mathrm{Tr}(A(x)D^{2}u(x)) .
\end{equation}
They call $Z$ a subunit vector for $A$ at $x$ if $A
\geq Z
\otimes Z
$, i.e. 
$$\xi^T A(x)\xi\geq (Z(x)\cdot \xi)^2 \qquad \forall \xi\in\R^d.
$$
It is easy to show that a classical subunit vector is a generalized SV in our sense, and 
that if $Z$ is a SV according to Definition \ref{subunit}, with $F$ linear, then $rZ$ is subunit  for the matrix $A$ 
 for all $r>0$ small enough, see Section \ref{prelim}. 

Our first result concerns the propagation of maxima of a subsolution to \eqref{1} along the trajectories of a subunit vector field.
\begin{thm} \label{teo1}
Assume $F$ satisfies {\upshape(i), (ii)}, and it has a 
 locally Lipschitz subunit vector field $Z$. Suppose $u\in \USC(\Omega)$ is a viscosity subsolution of  \eqref{1} attaining a nonnegative maximum at $x_0\in\Omega$. Then $u(x)=u(x_0)=\max_\Omega u$ for all $x=y(s)$ for some $s\in\R$, where $y'(t)=Z(y(t))$ and $y(0)=x_0$.
\end{thm}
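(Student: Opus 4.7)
I argue by contradiction. If the conclusion fails, upper semicontinuity of $u$ and closedness of $\{t : u(y(t)) \ge M\}$ (with $M := u(x_0)$) let me select a transition point on the trajectory: after translation and possibly replacing $Z$ with $-Z$ (still an SVF, since the SV condition is invariant under $Z\mapsto -Z$), I may assume $u(x_0) = M$, $u(y(s)) < M$ for all $s \in (0, \delta]$ for some $\delta > 0$, and $Z(x_0)\ne 0$ (otherwise the trajectory is stationary at $x_0$). Set $\hat Z := Z(x_0)/|Z(x_0)|$.

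Next I construct a smooth barrier from above. For $\rho > 0$ small enough that $\bar B_\rho(x_0+\rho\hat Z)\Subset\Omega$, put $\bar x := x_0+\rho\hat Z$ and
\begin{equation*}
v(x) := M + \mu\bigl(e^{-\alpha|x-\bar x|^2} - e^{-\alpha\rho^2}\bigr),
\end{equation*}
with $\mu,\alpha>0$ to be chosen. Then $v(x_0)=M$, $v>M$ in $B_\rho(\bar x)$, and $v=M$ on $\partial B_\rho(\bar x)$. From $|y(s)-\bar x|^2 = \rho^2 - 2\rho|Z(x_0)|s + O(s^2)$ the forward trajectory enters the ball for small $s>0$, so combining with $u\le M$ globally gives $u\le v$ on $\bar B_\rho(\bar x)$ with equality at $x_0$. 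Direct computation gives $Dv(x_0) = \lambda\hat Z$ and $D^2v(x_0) = (\lambda/\rho)\bigl(-I + 2\alpha\rho^2\hat Z\otimes\hat Z\bigr)$ with $\lambda := 2\alpha\mu\rho\, e^{-\alpha\rho^2}$.

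Now I use the SV condition. Since $Z(x_0)\cdot Dv(x_0) = \lambda|Z(x_0)|\ne 0$, there exists $\gamma>0$ with $F(x_0,0,\hat Z, I-\gamma\hat Z\otimes\hat Z) > 0$. For $\rho$ small, the matrix inequality $D^2v(x_0)/\lambda \le I-\gamma\hat Z\otimes\hat Z$ holds (the relevant scalar condition $\gamma + 2\alpha\rho \le 1 + 1/\rho$ is satisfied as $\rho\to 0$ for fixed $\gamma,\alpha$), and $\mu$ can be tuned so that $\lambda\in(0,1]$. Properness of $F$ in $X$ combined with the scaling (ii), applied at $s=0\in[-1,0]$ and $\xi=\lambda$, gives
\begin{equation*}
F(x_0, 0, Dv(x_0), D^2 v(x_0)) \ge \phi(\lambda)\, F(x_0, 0, \hat Z, I-\gamma\hat Z\otimes\hat Z) > 0,
\end{equation*}
and properness in the scalar variable together with $M\ge 0$ upgrades this to $F(x_0, M, Dv(x_0), D^2 v(x_0))>0$. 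On the other hand, since $v$ touches $u$ from above at $x_0$, the viscosity subsolution property forces $F(x_0, M, Dv(x_0), D^2v(x_0)) \le 0$, the desired contradiction.

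The main obstacle is justifying the last step rigorously. I have $u\le v$ only on $\bar B_\rho(\bar x)$, with $x_0\in\partial B_\rho(\bar x)$, while outside the ball $v<M$ although $u$ may attain $M$ (for instance along the backward trajectory, which the forward-only reduction does not control). So $x_0$ need not be a local maximum of $u-v$ in $\Omega$. This is handled either by modifying $v$ outside $B_\rho(\bar x)$ so that it still dominates $u$ in a full neighborhood of $x_0$ without altering the $2$-jet $(v(x_0), Dv(x_0), D^2v(x_0))$, or by invoking an equivalent formulation of the subsolution property via closures of second-order superjets and upper semicontinuity of $u$.
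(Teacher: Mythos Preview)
Your argument has a genuine gap at exactly the point you flag, and neither of your proposed fixes repairs it.

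The difficulty is not merely technical. You only establish $u\le v$ on the closed ball $\bar B_\rho(\bar x)$, with $x_0$ on its boundary. Outside the ball $v<M$, while $u$ may equal $M$ arbitrarily close to $x_0$ off the forward trajectory (your reduction gives no control there). Hence $x_0$ need not be a local maximum of $u-v$ in $\Omega$, so the viscosity test is not triggered. Your first fix is impossible in principle: any $C^2$ function $\tilde v$ sharing the $2$-jet $(M,\lambda\hat Z,D^2v(x_0))$ satisfies $\tilde v(x_0-t\hat Z)=M-\lambda t+O(t^2)<M$ for small $t>0$, so it cannot dominate $u$ on the outer side if $u=M$ there. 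Your second fix does not apply either: having a one-sided maximum at a boundary point of the comparison domain does not produce an element of $J^{2,+}u(x_0)$ or of its closure. Indeed, note that your argument never actually uses the information $u(y(s))<M$ for $s\in(0,\delta]$; with $u\equiv M$ (for which the theorem's conclusion holds trivially) your barrier still ``touches'' on $\bar B_\rho(\bar x)$, yet $(\lambda\hat Z,\,\cdot\,)\notin J^{2,+}u(x_0)$ because $u-v>0$ immediately outside the ball. This shows the touching configuration you build does not encode a superjet.

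The paper's proof avoids this by a different organization. It works with the closed set $K=\{u=M\}$ and proves the geometric fact (Proposition~\ref{prop11}) that at every point $z\in K^*$ (i.e.\ admitting an \emph{exterior ball} $B(y,R)\subset\Omega\setminus K$) and every proximal normal $\nu$, one has $Z(z)\cdot\nu=0$. The exterior ball guarantees $u<M$ throughout $B(y,R)$, which provides the strict room needed to run an $\epsilon$-scaled barrier argument so that the contact occurs at $z$ as an \emph{interior} maximum in a full ball $B(z,r)$, not at a boundary point. Having established tangency of $Z$ at all $K^*$-points, the paper then invokes a nontrivial invariance theorem for the ODE (Theorem~\ref{invariance}) to conclude that trajectories of $Z$ starting in $K$ remain in $K$. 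Your direct ``first escape time'' reduction tries to bypass this invariance step, but the escape point need not lie in $K^*$, which is precisely why the barrier cannot be made to touch in an open neighborhood.
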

If $F$ has more than one SVF, say a family $Z_i$, $i=1,\dots,m$, we can piece together their trajectories to find a larger set of propagation of the maximum.
It is natural to consider the control system
\begin{equation}
\label{S_0}
y'(t)=\sum_{i=1}^{m}Z_i(y(t))\beta_{i}(t)\ ,
\end{equation}
where the controls $\beta_i$ are measurable functions taking values in a fixed neighborhood of 0. If this system has the property of \emph{bounded time controllability} , namely
\begin{equation}\label{BTC}\tag{BTC}
\text{
$\forall\, x_{0},x_{1}\in\Omega 
\quad \exists \quad$a trajectory $y(\cdot)$ of 
 \eqref{S_0}}
\text{ with $y(0)=x_0, y(s)=x_1,$ 
$y(t)\in\Omega$ 
$\forall\, t\in[0,s]$,}
\end{equation}
then a nonegative maximum of the subsolution $u$ propagates to all $\Omega$, and therefore $u$ is constant. A classical sufficient condition for (BTC), for vector fields smooth enough, is the \emph{H\"ormander condition}  that $Z_1,...,Z_m$ 
and their commutators of any order span $\R^d$ at any point of $\Omega$. Then we have the following
\begin{cor}
[Strong Maximum Principle]
\label{SMaxP}
Assume  {\upshape(i), (ii)}, and the existence of 
subunit vector fields $Z_{i}$, $i=1,...,m$, of $F$ satisfying the H\"ormander condition.
Then any  viscosity subsolution of  \eqref{1} attaining a nonnegative maximum in $\Omega$ is constant.
\end{cor}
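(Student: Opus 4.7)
The plan is to follow the roadmap already sketched in the paragraphs preceding the statement: combine Theorem \ref{teo1} (propagation along a single SVF trajectory) with a bounded-time controllability result, where (BTC) is supplied by the H\"ormander condition via the classical Chow--Rashevskii theorem.

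I would start with the usual SMP setup. Let $u \in \USC(\Omega)$ attain a nonnegative maximum $M := \max_\Omega u \geq 0$ at some $x_0 \in \Omega$, and set $K := \{x \in \Omega : u(x) = M\}$. Then $K$ is nonempty and relatively closed in $\Omega$ by upper semicontinuity of $u$, and since $\Omega$ is connected it suffices to prove $K = \Omega$.

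Next, for any $x \in K$, $u$ attains at $x$ a nonnegative maximum, so Theorem \ref{teo1} applied to each SVF $Z_i$ yields $u \equiv M$ along the integral curve of $Z_i$ through $x$ for \emph{all} $s \in \R$ such that the curve stays in $\Omega$. Hence $K$ is invariant under both the forward and backward local flows of each $Z_i$ in $\Omega$, and by iteration under every finite concatenation of $\pm Z_i$-flows that remains inside $\Omega$. The Chow--Rashevskii theorem then asserts that, if the $Z_i$ are smooth enough and satisfy the H\"ormander condition, any two points of the connected open set $\Omega$ can be joined by such a piecewise concatenation inside $\Omega$ (this is exactly (BTC) for the system \eqref{S_0}). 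Applied to $x_0$ and an arbitrary $x_1\in\Omega$, this gives $x_1 \in K$, so $K=\Omega$ and $u$ is constant.

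The main (and really only) subtlety is invoking Chow--Rashevskii with trajectories constrained to the prescribed open set $\Omega$; this is standard since the theorem is inherently local (short iterated flows of the $\pm Z_i$ reach a full neighborhood of each point), and connectedness of $\Omega$ lets one bootstrap without ever leaving $\Omega$. A minor regularity remark is also needed: although Theorem \ref{teo1} only requires the $Z_i$ to be locally Lipschitz, computing iterated brackets in the H\"ormander condition implicitly demands a correspondingly higher smoothness of the vector fields, which is part of the hypotheses of the corollary.
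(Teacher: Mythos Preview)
Your proposal is correct and follows essentially the same approach as the paper: combine the propagation result with Chow--Rashevskii and the connectedness of $\Omega$. The only cosmetic difference is that the paper invokes the multi-field version (Theorem~\ref{teo:main}) directly and phrases the conclusion as ``$K$ is relatively open and closed in $\Omega$'', whereas you iterate the single-field Theorem~\ref{teo1} along piecewise concatenations of the $\pm Z_i$-flows; since Chow--Rashevskii only needs such bang-bang paths, this introduces no gap.
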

This result is a generalization to fully nonlinear equations of the classical maximum principle of Bony \cite{Bony} for smooth subsolutions of linear equations (see also 
\cite{Taira}).

Our main application concerns fully nonlinear subelliptic equations, as defined by Manfredi \cite{MNotes}. 
Given a family $\mathcal{X}=(X_1,...,X_m)$ of $C^{1,1}$ vector fields in $\Om$ 
one defines the intrinsic (or horizontal) gradient and intrinsic Hessian as 
$$
D_{\mathcal{X}}u=(X_1 u,...,X_m u) , \quad (D_{\mathcal{X}}^2 u)_{ij} =X_i (X_j u).
$$
A subelliptic equation has the form
\begin{equation}
\label{subell}
G(x,u,D_{\mathcal{X}}u,(D_{\mathcal{X}}^2u)^*)=0\ ,
\end{equation}
where $Y^*$ is the symmetrized matrix of $Y$ and $G:\overline{\Om}\times\R\times(\R^m\backslash\{0\})\times\Sym_m\to \R$ satisfies at least (i). We assume that   $G$ is \emph{elliptic for any $x$ and $p$ fixed} 
 in the following sense:
\begin{equation}
\label{elli}
\sup_{\gamma>0} G(x,0,q,X-\gamma q\otimes q)> 0 \quad \forall \, x\in \Om,\; q\in \R^{d}, \; q\ne 0 , \; X\in \Sym_m . 
\end{equation}
By rewriting the equation \eqref{subell} in Euclidean coordinates we find an equivalent equation of the form \eqref{1} with $F$ having $X_1,...,X_m$ as subunit vector fields. Then we find the following Strong Maximum Principle for fully nonlinear subelliptic equations:
\begin{cor}
\label{SMPsub}
Assume $G$ verifies {\upshape(i), (ii)}, and  \eqref{elli}, and the vector fields $X_1,...,X_m$
 satisfy the H\"ormander condition.
Then any  viscosity subsolution of  \eqref{subell} attaining a nonnegative maximum in $\Omega$ is constant.
\end{cor}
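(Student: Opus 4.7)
My plan is to reduce Corollary \ref{SMPsub} to Corollary \ref{SMaxP} by rewriting \eqref{subell} as an equation of the form \eqref{1} in Euclidean coordinates, and then showing that the horizontal vector fields $X_{1},\dots,X_{m}$ are subunit vector fields in the sense of Definition \ref{subunit} for the Euclidean operator. Denoting by $\sigma(x)$ the $d\times m$ matrix whose columns are the coefficients of $X_{1},\dots,X_{m}$, expanding $X_{i}(X_{j}u)$ and symmetrizing yields
\begin{equation*}
D_{\mathcal{X}}u = \sigma(x)^{T}Du, \qquad (D_{\mathcal{X}}^{2}u)^{*} = \sigma(x)^{T}D^{2}u\,\sigma(x) + L(x,Du),
\end{equation*}
where $L(x,p)\in\Sym_{m}$ is linear in $p$ and encodes the first-order commutator terms. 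This dictates the definition
\begin{equation*}
F(x,r,p,X) := G\bigl(x,r,\sigma(x)^{T}p,\,\sigma(x)^{T}X\sigma(x)+L(x,p)\bigr).
\end{equation*}

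First I would check that $F$ satisfies (i) and (ii): lower semicontinuity is inherited from $G$ together with the locally Lipschitz regularity of $\sigma$ granted by the $C^{1,1}$ hypothesis, properness follows from the monotonicity of $X\mapsto \sigma^{T}X\sigma$ combined with properness of $G$, and the scaling (ii) transfers because $L$ is linear in $p$ and $\sigma^{T}(\xi X)\sigma+L(x,\xi p)=\xi(\sigma^{T}X\sigma+L(x,p))$. I would also verify that a viscosity subsolution of \eqref{subell} is a viscosity subsolution of $F(x,u,Du,D^{2}u)=0$: on test functions $\varphi$ for which $\sigma(x_{0})^{T}D\varphi(x_{0})\ne 0$ the two inequalities coincide by the identities above, and on the degenerate set $\sigma^{T}D\varphi=0$ the issue is treated by the usual lower semicontinuous extension for singular fully nonlinear operators.

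The central step, and the one I expect to be the main obstacle, is the identification of the SVF condition. Fix $x\in\Om$ and $p\in\R^{d}$ with $X_{i}(x)\cdot p\ne 0$, and set $q:=\sigma(x)^{T}p\in\R^{m}$; since $q_{i}=X_{i}(x)\cdot p\ne 0$, in particular $q\ne 0$. A direct substitution gives
\begin{equation*}
F(x,0,p,I-\gamma\, p\otimes p) = G\bigl(x,0,q,\,A(x,p)-\gamma\,q\otimes q\bigr),
\end{equation*}
with $A(x,p):=\sigma(x)^{T}\sigma(x)+L(x,p)\in\Sym_{m}$, and the ellipticity assumption \eqref{elli} for $G$, applied to the nonzero vector $q$ and the fixed matrix $A(x,p)$, yields that the supremum over $\gamma>0$ is strictly positive. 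This is exactly the subunit condition of Definition \ref{subunit}, so $X_{1},\dots,X_{m}$ are subunit vector fields of $F$; being locally Lipschitz and satisfying the H\"ormander condition by assumption, Corollary \ref{SMaxP} then forces any viscosity subsolution of \eqref{subell} attaining a nonnegative interior maximum to be constant. The subtlety to watch for is that the Euclidean ellipticity demanded by Definition \ref{subunit} need only be tested along $p$'s with $\sigma^{T}p\ne 0$, and this is precisely built into the SVF hypothesis $X_{i}(x)\cdot p\ne 0$, keeping the computation inside the domain where $G$ is defined.
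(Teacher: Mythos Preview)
Your proposal is correct and follows essentially the same route as the paper: rewrite \eqref{subell} in Euclidean coordinates as $F(x,u,Du,D^2u)=0$ with $F(x,r,p,X)=G(x,r,\sigma^T p,\sigma^T X\sigma+g(x,p))$, check that properties (i)--(ii) transfer to $F$, verify via the computation $F(x,0,p,I-\gamma p\otimes p)=G(x,0,\sigma^T p,\sigma^T\sigma+g(x,p)-\gamma(\sigma^T p)\otimes(\sigma^T p))$ together with \eqref{elli} that each $X_i$ is a subunit vector field for $F$, and then invoke Corollary~\ref{SMaxP}. The paper packages the first three steps into a separate lemma and then deduces Corollary~\ref{SMPsub} from Corollary~\ref{Propa:sub} exactly as you do; your treatment is in fact slightly more careful in flagging the domain issue when $\sigma^T p=0$, which the paper leaves implicit.
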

In Section \ref{sec:sube} we give several examples of operators satisfying the assumptions of this result, including the $m$-Laplacian, the  $\infty$-Laplacian, and Pucci's extremal operators associated to H\"ormander vector fields. Let us recall that the generators of stratified Lie groups, or Carnot groups, satisfy the H\"ormander property. Many examples of such sub-Riemannian structures can be found in \cite{BLU}, the most famous being the Heisenberg group, Example \ref{heis}. Therefore the last Corollary applies to a large number of degenerate elliptic PDEs. 
In Section \ref{sec:appl} we also give applications to Hamilton-Jacobi-Bellman and Isaacs equations.  

Next we make an application to a Strong Comparison Principle
, that is, the following property:

\smallskip
\noindent (SCP) \emph{ if $u$ and $v$ are a sub- and supersolution of \eqref{1} and $u-v$ attains a nonnegative maximum in $\Om$, then $u \equiv v + $constant.}
\smallskip

 \noindent If $\Om$ is bounded the SCP implies the usual (weak) Comparison Principle, namely, $u\leq v$ in $\Om$ if in addition $u\in \USC(\overline\Om)$,  $v\in \LSC(\overline\Om)$, and $u\leq v$ in $\partial\Om$. For a class of equations that can be written in Hamilton-Jacobi-Bellman  form we can show that $w:=u-v$ is a subsolution of a homogeneous PDE $F_0(x,w,Dw, D^2w)=0$ satisfying the SMP, and therefore we deduce immediately the SCP. A model problem is the equation
\begin{equation}
\label{Pucci+H}
\mathcal{M}^{+
}((D_{\mathcal{X}}^2u)^*) + H(x, 
Du) = 0 ,
\end{equation}
where $\mathcal{M}^+
$ denotes the Pucci's maximal operator (see Section \ref{sec:sube} for the definition), $\!\mathcal{X}\!=\!(X_1,...,X_m)$ are H\"ormander vector fields, and $H(x,
p)=\sup_\al\{ p\cdot b^\al(x) + 
 f^\al(x)  \}$ with data $b^\al, 
  f^\al$ bounded and Lipschitz 
 uniformly in $\al$
 .  
Remarkably, this result implies the (weak) Comparison Principle also in some cases for which it was not yet known, see Section \ref{sec:scp}.

\smallskip
The Strong Maximum Principle for elliptic equations goes back to E. Hopf and has a very wide literature, see, e.g., the treatise \cite{GT} and the references therein. We will only mention the papers close enough to our work. 
 The 
seminal contributions on degenerate elliptic linear equations are due to Bony \cite{Bony} and Stroock and Varadhan \cite{SV}: they made the link between the propagation set and, respectively, the set reachable  by a deterministic control system and the support of a diffusion process, for classical solutions. For viscosity subsolutions of uniformly elliptic fully nonlinear equations the SMP was proved by Caffarelli and Cabr\'e \cite{CC} as a consequence of the Harnack inequality. Under lower ellipticity assumptions it was derived in a more direct way in \cite{KK} (in a weaker form) and \cite{BDL1}. Control theoretic and probabilistic descriptions of the propagation set for Hamilton-Jacobi-Bellman equations were given in \cite{BDL2} and  \cite{BDL3}. Our SMP for such equations, Corollary \ref{smp-hjb}, is derived in a simpler way and extends to Isaacs equations, see Section \ref{sec:hji}.
 
 The theory of subelliptic fully nonlinear PDEs began with \cite{MNotes} and \cite{B}, see also \cite{BBM, BCAMS, Wa}. Corollary \ref{SMPsub} seems to be the first Strong Maximum Principle for such equations.
 
 The Strong Comparison Principle for Lipschitz 
  viscosity solutions of uniformly elliptic 
 equations was found by Trudinger \cite{Tru}. There are only a few other results of this kind for fully nonlinear equations: they concern particular PDEs motivated by geometric problems \cite{GO, ML, LW, Capogna}  and are quite different from our Theorem \ref{SCP}. On the other hand the literature on the (weak) Comparison Principle is huge: the results are very general if $F$ is strictly proper (i.e., strictly increasing in $r$) since they include first order equations, see \cite{CIL, BCD}. Under the mere properness (ii), instead,  some ellipticity is needed and the minimal conditions are an open problem, see \cite{Je2, BB, KK, KK2}, and \cite{MNotes, B, BM, Wa, Bi12} for equations involving H\"ormander vector fields, see also the references therein. Our Corollary \ref{WCP} completes the results of \cite{BM}.

As an application of the SMP we will prove in a forthcoming paper the Liouville property for some fully nonlinear equations, extending to the degenerate elliptic case some results of \cite{BC}. 
By the methods of this paper we can also prove SMP and SCP for degenerate parabolic equations, some of these results will appear in a forthcoming paper and in
\cite{TesiAle}.

The paper is organized as follows. In Section \ref{sect:smp} we prove a geometric property of the propagation set of an interior maximum in terms of SV and deduce the connection with the controllability of system \eqref{S_0}, as well as a Hopf boundary lemma. Then we get some strong maximum and minimum principles. Section \ref{sec:appl} presents the applications to some subelliptic nonlinear equations associated to a family of vector fields, to H-J-B and H-J-Isaacs equations, and some other examples. All these results are new, except for the Euclidean case, i.e., when $\mathcal{X}$ is a basis of $\R^d$. Finally, in Section \ref{sec:scp} we prove the Strong Comparison Principle and give some examples.


\section{Strong Maximum and Minimum Principles}
\label{sect:smp}
\subsection{Definitions and preliminaries}
\label{prelim}



We begin by comparing our Definition \ref{subunit} of subunit vector for the operator $F$ 
with the classical one given by Fefferman-Phong for linear operators \eqref{lin}. 
We recall that 
 a vector 
  $Z$ is subunit for $A$ at a point $x$, that we freeze and do not display,  if $A
  \geq Z
\otimes Z(x)$. 
Then 
\begin{equation*}
F(x,0,p,I-\gamma p\otimes p)= 
 -\mathrm{Tr} A + \gamma p \cdot A
 p \geq  -\mathrm{Tr} A + \gamma\sum_{i,j}Z_i Z_j p_j p_i
=-\mathrm{Tr} A + \gamma|Z(x)\cdot p|^{2}
\end{equation*}
which can be made positive for $\gamma$ large enough if $Z\cdot p\ne 0$.
As a partial converse we can prove the following.
\begin{lemma}
If $Z$ is a SV at $x$ for $F$ linear \eqref{lin}, then  $rZ$ is subunit for $A(x)$ for some $r>0$. 
\end{lemma}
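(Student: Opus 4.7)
My plan is to reduce the lemma to a purely linear-algebraic statement about $A=A(x)$ and then read off the constant $r$ from spectral data. The starting point is already provided just above the lemma: for the linear operator \eqref{lin} one has $F(x,0,p,I-\gamma p\otimes p) = -\Tr A + \gamma\, p\cdot A p$. Since $F$ is proper by (i), a standard test on rank-one perturbations $Y = X - t\,\eta\otimes\eta$ shows that the symmetric part of $A$ must be positive semidefinite; I replace $A$ by this symmetric part at the outset. Consequently $p\cdot A p\ge 0$ for every $p$, so $\sup_{\gamma>0}\bigl(-\Tr A + \gamma\, p\cdot A p\bigr)$ equals $+\infty$ if $p\cdot A p>0$ and equals $-\Tr A\le 0$ otherwise. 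Thus Definition \ref{subunit} collapses to the algebraic implication
\[
Z\cdot p \ne 0 \;\Longrightarrow\; p\cdot A p > 0,
\]
equivalently $\ker A \subseteq Z^\perp$, where I use that for symmetric $A\ge 0$ one has $\{p : p\cdot A p = 0\} = \ker A$.

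In the second step I would invoke the spectral theorem. Write $\R^d = \ker A \oplus \Imm A$ (orthogonal, since $A$ is symmetric); the restriction of $A$ to $\Imm A$ is positive definite, with smallest eigenvalue $\lambda>0$. The inclusion above forces $Z\in(\ker A)^\perp = \Imm A$. For arbitrary $\xi\in\R^d$, decompose $\xi=\xi_0+\xi_1$ with $\xi_0\in\ker A$, $\xi_1\in\Imm A$. Then $\xi\cdot A\xi = \xi_1\cdot A\xi_1 \ge \lambda|\xi_1|^2$, while $Z\cdot\xi = Z\cdot\xi_1$ (because $Z\perp\xi_0$), and the Cauchy--Schwarz inequality gives $(Z\cdot\xi_1)^2 \le |Z|^2|\xi_1|^2$. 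Combining the two bounds,
\[
\xi\cdot A\xi \;\ge\; \frac{\lambda}{|Z|^2}\,(Z\cdot\xi)^2,
\]
which is exactly the Fefferman--Phong inequality $A\ge r^2\, Z\otimes Z$ with $r:=\sqrt{\lambda}/|Z|>0$. Hence $rZ$ is subunit for $A(x)$ in the classical sense, which is the desired conclusion. The degenerate case $Z=0$ is trivial, any $r>0$ working.

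The only point that demands genuine care is the first step, namely extracting symmetry and positive semidefiniteness of $A$ from properness alone; once that is in hand the rest is entirely routine linear algebra, and I do not anticipate any serious obstacle.
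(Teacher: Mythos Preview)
Your proof is correct and follows the same underlying linear algebra as the paper: reduce the SV condition to the implication $Z\cdot p\ne 0 \Rightarrow p^TAp>0$, diagonalize, and conclude that $Z$ lies in the range of $A$. The packaging differs. The paper invokes the ellipsoid characterization from \cite{Taira}, namely that $rZ$ is subunit for $A$ if and only if $rZ$ belongs to the ellipsoid $E=\{\eta:\sum_{i\le k}\eta_i^2/\lambda_i\le 1,\ \eta_{k+1}=\dots=\eta_d=0\}$, and then argues by contradiction that no component of $Z$ can lie in the kernel directions. Your route is direct and self-contained: from $\ker A\subseteq Z^\perp$ you read off $Z\in(\ker A)^\perp=\Imm A$, then a one-line spectral estimate gives the explicit constant $r=\sqrt{\lambda}/|Z|$ with $\lambda$ the smallest positive eigenvalue. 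This buys you a concrete value of $r$ and avoids the external reference; the paper's version is shorter once the ellipsoid characterization is granted. Your care in deriving $A\ge 0$ from properness is also a small improvement, since the paper takes this for granted when diagonalizing with nonnegative eigenvalues.
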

\begin{proof}
In view of Definition \ref{subunit}, one easily observes that $Z$ is SV if and only if
\[
\sum_{i,j}a_{ij}p_ip_j=\mathrm{Tr}(Ap\otimes p)>0\text{ for all $p$ such that $p\cdot Z\neq0$.} 
\]
Set $k=\mathrm{rank}(A)$. Then, one may always diagonalise the matrix $A$ in order to have that
\[
a_{ij}=\lambda_i\delta_{ij}\ ,\lambda_i>0\text{ for }i=1,...k\ ,\lambda_i=0\text{ for }i=k+1,...,d\ ,
\]
so the above condition reads
\begin{equation}\label{SubunitEq}
\sum_{i}\lambda_ip_i^2>0\text{ for all $p$ such that }p\cdot Z\neq0\ .
\end{equation}
One can check the following easy characterisation 
 \cite{Taira}: $Z$ is subunit for $A$ if and only if $r Z$ is contained in the following ellipsoid
\[
E:=\left\{\eta\in\R^d:\sum_{i=1}^k\frac{\eta_i^2}{\lambda_i}\leq 1\ ,\eta_{k+1}=...=\eta_d=0\right\}
\]
for some small $r$. Then, 
if $r Z$ does not belong to $E$ there exists a component $Z_j\neq 0$ with $j=k+1,...,d$, since, up to rescaling, the condition $\sum_{i=1}^k\frac{\eta_i^2}{\lambda_i}\leq 1$ is always satisfied. Thus, by taking $p=e_{j}$ it follows that $p\cdot Z\neq 0$, but
$
\sum_{i}\lambda_i p_i^2=0 ,
$
a contradiction with \eqref{SubunitEq}.
\end{proof}

\begin{ex}
It is easy to check, by means of Cauchy-Schwarz inequality, that  the columns of a positive semidefinite matrix $A$ are subunit vectors 
 after multiplication by a sufficiently small 
constant. Moreover, if $A$ can be decomposed as $A=
\sigma\sigma^T$ with $\sigma\in\R^{d\times m}$, then the columns of $\sigma$ are subunit vectors for $A$ 
 (see, e.g., \cite[Example 2.2-2.3]{BDL3}).
\end{ex}

Since equation \eqref{1} 
can be singular at $p=0$, the notion of viscosity solution 
 is slightly weakened 
 with respect to the classical one 
 \cite{CIL}, as follows:

\noindent
{\it a function $u\in\USC(\Om)$ (resp. $\LSC(\Om)$) is a viscosity subsolution (resp. supersolution) of the \eqref{1} in $\Omega$ if, for every $\varphi\in C^2(\Omega)$ and $x$ maximum (resp. minimum) point of $u-\varphi$ such that $D\varphi(x)\neq0$ 
}
\begin{equation*}
F(x,u(x),D\varphi(x),D^2\varphi(x))\leq 0\ (\text{resp.} \geq 0)\ .
\end{equation*}
From now on all sub- and supersolutions will be meant in the viscosity sense.

We define the \emph{Propagation set}  of a  viscosity subsolution $u$ 
 of \eqref{1} attaining a nonnegative maximum at $x\in\Om$ as 
 $$
 \Prop(x,u):=\{ y\in\Omega : u(y)=u(x)= \max_\Om u\} .
 $$ 

We will need the notion of generalized exterior normal, also called Bony normal or 
{proximal normal} (see, e.g., \cite{Bony} or \cite[Definition 2.17]{BCD}):

\noindent a unit vector $\nu$ is a \emph{generalized exterior normal} to a nonempty set $K\subseteq\R^d$ at $z\in\partial K$ if 
there is a ball outside $K$ centered at $z+t\nu$ 
for some $t>0$ touching $\overline{K}$ precisely at $z$, i.e. 
$\overline{B}(z+t\nu,t)\cap\overline{K}=\{z\}$. Then we 
 write that  $\nu\bot K$ at $z$, and we use also the notation
\begin{equation*}
K^*:=\{z\in \partial K: \text{there exists } \nu\bot K\text{ at }z\}\ .
\end{equation*}

As in the classical paper of Bony \cite{Bony} we will use a 
geometric characterisation of invariant sets for the control system \eqref{S_0}, that we recall next.  We consider as admissible the 
control functions $\beta=(\beta_1,...,\beta_m): [0,+\infty) \to \R^m$ in the set
\[
\mathcal{B}:=\{\beta : \sum_{i=1}^{m}\beta_{i}^{2}(t)\leq1\text{ and }
\text{$\beta_i$ is measurable  $\forall \, i=1,...,m$}\} , 
\]
and denote with   $y_x(\cdot,\beta)$ the solution of the system \eqref{S_0} with initial condition $y(0)=x$, which exists at least locally if the vector fields $Z_i : \overline\Om \to \R^d$ are locally Lipschitz. 

\noindent A 
set $K\subseteq \overline\Om$ is \emph{invariant for the system \eqref{S_0}} if
 for all $x\in K$, $\beta\in\mathcal{B}$ 
 and $\tau>0$ such that the solution $y_x(\cdot,\beta)$ exists in $[0,\tau)$, we have $y_x(t,\beta)\in K$ for all $t\in[0,\tau)$.
\begin{thm}\label{invariance}
Let $Z_i : \overline\Om \to \R^d$ be locally Lipschitz and $ K\neq\emptyset$ 
 be a relatively closed subset of $\Om$. 
 If for all $x\in K^*\cap\Om$ and for all $\nu\bot K$ at $x$  
\begin{equation}\label{orto}
 Z_i(x)\cdot\nu=0 \quad \forall \, i=1,\dots, m,
\end{equation}
  then $K$ is invariant for \eqref{S_0}.
\end{thm}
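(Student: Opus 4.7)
The plan is to adapt Bony's classical invariance argument \cite{Bony} to our setting of several Lipschitz vector fields driven by measurable controls. Suppose by contradiction that there exist $x_0\in K$, $\beta\in\mathcal{B}$ and $t_1\in(0,\tau)$ with $y(\cdot):=y_{x_0}(\cdot,\beta)$ satisfying $y(t_1)\notin K$. Set $t_0:=\inf\{t\in[0,t_1]:y(t)\notin K\}$; since $K$ is relatively closed in $\Om$ and $y$ is continuous, $y(t_0)\in K$. Translating time (and taking $y(t_0)$ as new initial point) we may assume $t_0=0$, so that $y(0)=x_0\in K$ and $y(t_n)\notin K$ for some sequence $t_n\to 0^+$. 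Fix $r>0$ so small that $\overline{B}(x_0,r)\subset\Om$, the fields $Z_i$ are Lipschitz on $\overline{B}(x_0,r)$ with a common constant $L$, and $y(t)\in\overline{B}(x_0,r/2)$ for $t\in[0,T]$ with $T$ small. Since $K\cap\overline{B}(x_0,r)$ is compact, a nearest point $z(t)\in K$ to $y(t)$ exists for every such $t$. Introduce
\[
\varphi(t) := \mathrm{dist}(y(t),K)^2, \qquad t \in [0,T].
\]
The goal is to prove $\dot\varphi\le C\varphi$ a.e. on $[0,T]$, which together with $\varphi(0)=0$ and Gronwall gives $\varphi\equiv 0$, contradicting $\varphi(t_n)>0$.

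The key computation is carried out at a time $t$ of differentiability of $y$ where $\varphi(t)>0$ (such $t$ form a full measure set, $y$ being absolutely continuous with $y'(t)=\sum_i Z_i(y(t))\beta_i(t)$). Choose any nearest point $z=z(t)$ and set $\nu:=(y(t)-z)/|y(t)-z|$. An elementary triangle-inequality argument shows that for every $s\in(0,|y(t)-z|]$ the closed ball $\overline{B}(z+s\nu,s)$ meets $K$ only at $z$: any $w$ in such an intersection would satisfy
\[
|w-y(t)| \le |w-(z+s\nu)|+|(z+s\nu)-y(t)| \le s+(|y(t)-z|-s) = |y(t)-z|,
\]
which, since $z$ is a nearest point, forces equality throughout the triangle inequality and hence $w=z$. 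Therefore $\nu\bot K$ at $z$ in the sense of the paper, so hypothesis \eqref{orto} yields $Z_i(z)\cdot(y(t)-z)=0$ for every $i$. Using this orthogonality, the upper Dini derivative of $\varphi$ satisfies
\[
\dot\varphi(t) \le 2(y(t)-z)\cdot y'(t) = 2\sum_i \beta_i(t)\,(y(t)-z)\cdot (Z_i(y(t))-Z_i(z)) \le 2\sqrt{m}\,L\,\varphi(t),
\]
by the Lipschitz bound $|Z_i(y(t))-Z_i(z)|\le L|y(t)-z|$, Cauchy--Schwarz in $\R^m$, and $\sum_i\beta_i^2(t)\le 1$, establishing the announced differential inequality.

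The main obstacle I anticipate is the nonsmoothness of $\mathrm{dist}(\cdot,K)^2$ together with the potential non-uniqueness of the projection $z(t)$; that is why the argument relies on the upper Dini derivative computed at an arbitrary nearest point rather than on a smooth selection. The companion technical issue is to verify that $(y(t)-z)/|y(t)-z|$ truly qualifies as a proximal exterior normal in the strict sense demanded by the paper's definition, namely $\overline{B}(z+s\nu,s)\cap\overline{K}=\{z\}$ for some $s>0$, also when the projection is not unique---this is precisely what the variable-radius ball argument above achieves, placing $z(t)$ in $K^*\cap\Om$ and making the hypothesis \eqref{orto} applicable.
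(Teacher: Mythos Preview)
Your argument is correct and is precisely the classical Bony distance--Gronwall argument, carried out with the localization that keeps the nearest point $z(t)$ inside $\Omega$. This is exactly the proof the paper invokes by citing \cite[Theorem~2.1]{BDL2}; since the paper gives no details beyond that reference, you have in fact written out more than the paper itself.

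One small inaccuracy: the claim that $\overline{B}(z+s\nu,s)\cap K=\{z\}$ for every $s\in(0,|y(t)-z|]$ fails at the right endpoint when the metric projection onto $K$ is not unique --- for $s=|y(t)-z|$ the ball is centered at $y(t)$ and its boundary contains all nearest points of $K$. Your triangle-equality step then only gives $|w-y(t)|=\mathrm{dist}(y(t),K)$, not $w=z$. Restricting to $s\in(0,|y(t)-z|)$ fixes this with no change to the rest of the proof; any such $s$ (taken small enough that the ball stays in $\Omega$) already certifies $\nu\bot K$ at $z$ in the paper's sense. The parenthetical ``such $t$ form a full measure set'' is also slightly misleading: it is the differentiability points of $y$ that have full measure, and on the complementary set $\{\varphi=0\}$ one simply uses that a nonnegative differentiable function vanishing at a point has zero derivative there, so $\varphi'\le C\varphi$ holds a.e.\ on all of $[0,T]$ and Gronwall applies.
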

\begin{proof}
We can repeat the proof of \cite[Theorem 2.1]{BDL2}, which combines the classical result for $\Om =\R^d$ with a localization argument. Then it is easy to see that it is enough to assume \eqref{orto} at points $x\in\partial K\cap \Om$.
\end{proof}

%
\subsection{Propagation of maxima}
\label{sec:max}
We first give a technical result providing a crucial geometric property of the propagation set.

\begin{prop}\label{prop11}
Let $u$ 
be a viscosity subsolution of \eqref{1} that achieves a nonnegative maximum at $x
\in\Omega$. Assume that {\upshape (i)-(ii)} hold and $F$ has a subunit vector field as in Definition \ref{subunit}.
Then $K:=\Prop(x,u)$ is such that for every $z\in 
K^*\cap\Omega$ and for every $\nu\bot K$ at $z$ we have $Z\cdot\nu=0$ for every subunit vector of $F$ at $z$. 
\end{prop}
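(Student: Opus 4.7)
The plan is to argue by contradiction via an exponential Hopf-type barrier sitting in the tangent exterior ball. Assume that some $z\in K^*\cap\Omega$ has an exterior normal $\nu\bot K$ at $z$ and a subunit vector $Z$ at $z$ with $Z\cdot\nu\ne0$. Since $z\in K^*$, I may choose $t>0$ (to be shrunk as needed) so that $y_0:=z+t\nu$ gives an exterior ball $\overline{B(y_0,t)}\subset\Omega$ with $\overline{B(y_0,t)}\cap K=\{z\}$. Then $q_0:=-t\nu$ is nonzero and $Z\cdot q_0\ne0$, so Definition \ref{subunit} at $z$ applied to $q_0$ yields $\gamma>0$ with $F(z,0,q_0,I-\gamma q_0\otimes q_0)>0$; by properness (i) (replacing $\gamma$ by a larger value) I may assume $\gamma\ge 1$, and set $\alpha:=\gamma/2\ge 1/2$.

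Introduce the barrier $h(y):=e^{-\alpha|y-y_0|^2}$ and test function $\varphi:=-\epsilon h$, for $\epsilon>0$ small. Direct computation gives
\[
D\varphi(y)=2\alpha\epsilon(y-y_0)h(y),\qquad D^2\varphi(y)=2\alpha\epsilon h(y)\bigl[I-2\alpha(y-y_0)\otimes(y-y_0)\bigr].
\]
With $M:=u(z)=\max_\Omega u\ge0$ one has $(u+\epsilon h)(z)=M+\epsilon e^{-\alpha t^2}$. The central localisation step is to show that, for $r\in(0,t)$ and $\epsilon>0$ both small, $u+\epsilon h$ attains its maximum over $\overline{B(z,r)}$ at an \emph{interior} point $y^*\in B(z,r)$. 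Split $\partial B(z,r)=T_1\cup T_2$ with $T_1:=\partial B(z,r)\cap\overline{B(y_0,t)}$ and $T_2:=\partial B(z,r)\setminus\overline{B(y_0,t)}$. On the compact set $T_1$ one has $T_1\cap K=\emptyset$ (since $\overline{B(y_0,t)}\cap K=\{z\}$ and $z\notin T_1$), so by upper semicontinuity $u\le M-\eta_1$ for some $\eta_1>0$, while $h\le e^{-\alpha(t-r)^2}$; hence $u+\epsilon h< M+\epsilon e^{-\alpha t^2}$ on $T_1$ for $\epsilon$ small. On $T_2$ one has $h\le e^{-\alpha t^2}$ and $u\le M$, and simultaneous equality would force $y\in K\cap\partial B(y_0,t)=\{z\}$, excluded as $z\notin T_2$; a USC/compactness argument yields a uniform strict inequality there as well. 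The maximum on $\overline{B(z,r)}$ is therefore interior.

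At $y^*$, $u-\varphi$ has a local maximum and $D\varphi(y^*)\ne0$ (since $y^*\ne y_0$ for $r<t$), so the viscosity subsolution property gives
\[
F\bigl(y^*,u(y^*),D\varphi(y^*),D^2\varphi(y^*)\bigr)\le 0. \qquad (\star)
\]
Set $q:=y^*-y_0$ and $\xi:=2\alpha\epsilon h(y^*)>0$: then $D\varphi(y^*)=\xi q$ and $D^2\varphi(y^*)=\xi[I-2\alpha q\otimes q]$, exactly of the form required by the scaling (ii). Moreover $y^*\in\overline{B(y_0,t)}$: otherwise $h(y^*)<e^{-\alpha t^2}$ together with $(u+\epsilon h)(y^*)\ge(u+\epsilon h)(z)$ would force $u(y^*)>M$, contradicting $u\le M$. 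Hence $q\to q_0$ and $I-2\alpha q\otimes q\to I-\gamma q_0\otimes q_0$ as $r\to 0$, and lower semicontinuity of $F$ gives
\[
\liminf_{r\to 0}F(y^*,0,q,I-2\alpha q\otimes q)\ge F(z,0,q_0,I-\gamma q_0\otimes q_0)>0.
\]
Combining with (ii) (applied with $s=0$ when $u(y^*)\ge 0$, after using properness; in the marginal case $M=0$ the choice $\alpha\ge 1/2$ ensures $u(y^*)/\xi\in[-1,0]$ so that (ii) applies with $s=u(y^*)/\xi$) yields $F(y^*,u(y^*),D\varphi(y^*),D^2\varphi(y^*))\ge\phi(\xi)\cdot(\text{positive})>0$, contradicting $(\star)$.

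The main difficulty is matching the Hessian of the barrier to the matrix $I-\gamma q_0\otimes q_0$ furnished by the SV inequality. The key idea is to invoke Definition \ref{subunit} with the \emph{non-unit} vector $q_0=-t\nu$, so that $q_0\otimes q_0=t^2\nu\otimes\nu$ absorbs exactly the geometric factor arising from the exterior ball and aligns the two expressions under the scaling (ii). Handling the borderline case $u(y^*)<0$, which can occur only when $M=0$, is a secondary technicality resolved a priori by enlarging $\gamma$ to guarantee $\alpha\ge 1/2$, keeping $u(y^*)/\xi$ within the admissible range of (ii).
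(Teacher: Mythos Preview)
Your argument follows the same strategy as the paper's proof: an exponential Hopf-type barrier in the exterior ball, combined with the scaling hypothesis (ii) and lower semicontinuity of $F$, to reach a contradiction. The paper's execution differs in that it fixes the radius $r$ once (via lower semicontinuity, so that the barrier $v(x)=e^{-\gamma R^2}-e^{-\gamma|x-y|^2}$ is a strict supersolution on all of $B(z,r)$) and then runs a two-step comparison: first showing $u(x)-u(z)\le\epsilon v(x)$ on $B(z,r)\cap B(y,R)$, then using that $u-\epsilon v$ has a maximum at $z$ itself. This avoids your limit $r\to 0$ and the floating maximum point $y^*$. The paper's device of replacing $u$ by $u-u(z)$ (still a subsolution since $u(z)\ge 0$ and $F$ is proper) together with the built-in range $v\in(-1,0)$ inside the ball also sidesteps the case analysis on the sign of $u(y^*)$.

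There is a genuine, though repairable, gap in your treatment of the case $M=0$ with $u(y^*)<0$. You correctly arrange $s:=u(y^*)/\xi\in[-1,0]$ so that (ii) applies, but then assert the result is ``$\phi(\xi)\cdot(\text{positive})$''. The only positivity you established is for $\liminf_{r\to 0} F(y^*,0,q,I-2\alpha q\otimes q)$, with second argument $0$; when $s<0$, properness gives $F(y^*,s,\cdot,\cdot)\le F(y^*,0,\cdot,\cdot)$, which is the wrong direction, so positivity of $F(y^*,s,q,I-2\alpha q\otimes q)$ is unjustified as written. The fix is to observe that $s\to 0$ as $r\to 0$: from $(u+\epsilon h)(y^*)\ge(u+\epsilon h)(z)$ and $y^*\in\overline{B(y_0,t)}$ you get
\[
0\ge s\ge \frac{e^{-\alpha t^2}-h(y^*)}{2\alpha\, h(y^*)}\longrightarrow 0,
\]
since $h(y^*)\to h(z)=e^{-\alpha t^2}$. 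Then lower semicontinuity of $F$ in all four arguments gives $F(y^*,s,q,I-2\alpha q\otimes q)>0$ for $r$ small, and the contradiction follows. With this addition your proof is complete.
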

\begin{proof} 
We fix $z\in \partial K\cap\Omega$ and $\nu\bot K$ at $z$. 
Arguing by contradiction, we assume  there exists a subunit vector $\bar{Z}$ at $z$ such that $\bar{Z}\cdot\nu\neq0$.  By definition of normal we can take  $R>0$ and $y=z+R\frac{\nu}{|\nu|}$ such that $B(y,R)\subseteq\Omega\backslash K$. We divide the proof in two steps.\\
\par\smallskip
\textit{Step 1}. We claim that there exist $r>0$ and a function $v\in C^{2}(\R^d)$ such that
\begin{equation*}
F(x,v(x),Dv(x),D^2 v(x))\geq C>0\text{ for every }x\in B(z,r)\ ,
\end{equation*}
with the properties $v(z)=0$, $-1<v<0$ in $B(y,R)$ and $v>0$ outside $B(y,R)$.\\
To see this, consider
\begin{equation}\label{v}
v(x)=e^{-\gamma R^{2}}-e^{-\gamma|x-y|^2}\ .
\end{equation}
Note that $v\equiv0$ on $\partial B(y,R)$ (which gives $v(z)=0$) and $v>0$ outside $B(y,R)$. Moreover $-1<v<0$ in $B(y,R)$. By direct computations we have
\begin{equation*}
Dv(x)=2\gamma e^{-\gamma|x-y|^2}(x-y)
\end{equation*}
and
\begin{equation*}
D^2 v(x)=2\gamma e^{-\gamma|x-y|^2}(I-2\gamma(x-y)\otimes(x-y))\ .
\end{equation*}
Now, using that $z-y=-\nu$ and the scaling property (ii) we have
\begin{multline}\label{pos1}
F(z,v(z),Dv(z),D^2 v(z))=F(z,0,2\gamma e^{-\gamma R^2}(-\nu),2\gamma e^{-\gamma R^2}(I-2\gamma\nu\otimes\nu))\\
\geq \phi(2\gamma e^{-\gamma R^2})F(z,0,-\nu,I-2\gamma\nu\otimes\nu)\ .
\end{multline}
By the definition of subunit vector at $z$ and $\bar{Z}\cdot\nu\ne 0$ we obtain
\begin{equation*}\label{posi2}
F(z,0,-\nu,I-2\gamma\nu\otimes\nu)> 0 
\end{equation*}
for some $\gamma
>0$. 
Then \eqref{pos1} and $ \phi(\xi)>0$ for all $\xi>0$ give
$F(z,v(z),Dv(z),D^2 v(z))>0$. Since $F$ is lower semicontinuous 
 we can conclude that there exists $r>0$ such that
\begin{equation}
\label{cont}
F(x,v(x),Dv(x),D^2 v(x))\geq C>0\text{ for every }x\in B(z,r)\ .
\end{equation}

\textit{Step 2.} We claim now that there exists $\epsilon>0$ such that $u(x)-u(z)\leq\epsilon v(x)$ in $X:=B(z,r)\cap B(y,R)$.\\
Let us choose $\epsilon>0$ small enough such that $u(x)-u(z)\leq \epsilon v(x)$ for every $x\in\partial X$. To prove that the inequality holds on the whole $X$, suppose by contradiction that there exists $\bar{x}\in X$ such that $u(\bar{x})-u(z)-\epsilon v(\bar{x})=\max_{X}(u-u(z)-\epsilon v)>0$. Since $\epsilon v$ is smooth in $\R^d$, using that $u-u(z)$ is a viscosity subsolution of \eqref{1}  and the scaling property (ii) we get
\begin{equation*}
\phi(\epsilon)F(\bar{x}, v(\bar{x}), Dv(\bar{x}), D^2 v(\bar{x}))\leq F(\bar{x},\epsilon v(\bar{x}),\epsilon Dv(\bar{x}),\epsilon D^2 v(\bar{x}))\leq 0
\end{equation*}
which contradicts \eqref{cont} because $\phi>0$.
\\
\smallskip
Then $u(x)-\epsilon v(x)\leq u(z)$ and $u(z)-\epsilon v(z)=u(z)$ since $v(z)=0$. Therefore the function $\Phi(x):=u(x)-\epsilon v(x)$ has a maximum at $z$ in $X$. Moreover in $B(z,r)\backslash X$ we have $v\geq0$ and $u(x)-\epsilon v(x)\leq u(x)\leq u(z)$. As a consequence the function $\Phi(x)$ has a maximum in $B(z,r)$ at $z$. Since $\epsilon v\in C^{\infty}(\R^d)$, $F$ is proper, using also the definition of viscosity subsolution and (ii) we get
\begin{equation*}
\phi(\epsilon)F(z,v(z),Dv(z),D^2 v(z))\leq F(z,u(z),\epsilon Dv(z),\epsilon D^2 v(z))\leq 0,
\end{equation*}
 a contradiction with \eqref{cont}.
\end{proof}
Our main result is the following, containing Theorem \ref{teo1} as a special case.
\begin{thm}
\label{teo:main}
Let $u$ 
be a viscosity subsolution of \eqref{1} that achieves a nonnegative maximum at $x
\in\Omega$. Assume that {\upshape (i)-(ii)} hold and $F$ has  locally Lipschitz continuous subunit vector fields $Z_{i} : \overline\Om\to \R^d$, $i=1,...,m$.
Then $\Prop(x,u)$ contains all the points reachable by the system \eqref{S_0} starting at $x$, i.e., if $y=y_x(t,\beta)$ for some $ t>0, \beta\in\mathcal{B}$, then $y\in \Prop(x,u)$. 
\end{thm}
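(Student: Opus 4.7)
The plan is to identify the propagation set $K := \Prop(x,u)$ with a set that is invariant for the control system \eqref{S_0}, and then simply observe that a point reachable from $x\in K$ must lie in $K$.

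First I would check that $K$ is a relatively closed subset of $\Om$. Setting $M:=\max_\Om u$, upper semicontinuity of $u$ makes the superlevel set $\{y\in\Om: u(y)\ge M\}$ closed in $\Om$; since this superlevel set coincides with $K$, the desired closedness holds, and $K$ is nonempty because $x\in K$.

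Next I would apply Proposition \ref{prop11} to draw the consequence needed to invoke Theorem \ref{invariance}. For every $z\in K^*\cap\Om$ and every generalized exterior normal $\nu\perp K$ at $z$, Proposition \ref{prop11} yields $Z\cdot\nu=0$ for every subunit vector of $F$ at $z$; in particular, since each $Z_i$ is an SVF, the vector $Z_i(z)$ is SV at $z$, so $Z_i(z)\cdot\nu=0$ for every $i=1,\dots,m$. This is exactly condition \eqref{orto}, and the local Lipschitz regularity of the $Z_i$'s matches the hypothesis of Theorem \ref{invariance}. Applying that theorem, $K$ is invariant for \eqref{S_0}. Since $x\in K$, any admissible trajectory $y_x(\cdot,\beta)$ with $\beta\in\mathcal{B}$, defined on its maximal interval of existence inside $\Om$, must remain in $K$; in particular, if $y=y_x(t,\beta)$ for some $t>0$, then $y\in K=\Prop(x,u)$, which is the claim.

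I do not foresee any genuine obstacle in this step: all the analytic content (the PDE argument producing a one-sided geometric condition at boundary points of $K$) was absorbed in Proposition \ref{prop11}, and all the ODE/geometric content (turning such a condition into set invariance) was absorbed in Theorem \ref{invariance}. The proof is therefore a short assembly, the only small checks being the relative closedness of $K$ and the elementary remark that subunit vector fields evaluated pointwise are subunit vectors in the sense of Definition \ref{subunit}.
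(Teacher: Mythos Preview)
Your proposal is correct and follows essentially the same route as the paper: verify that $K=\Prop(x,u)$ is nonempty and relatively closed, invoke Proposition~\ref{prop11} to obtain $Z_i(z)\cdot\nu=0$ at every $z\in K^*\cap\Om$, and then apply Theorem~\ref{invariance} to conclude invariance. The paper's proof differs only cosmetically, singling out the trivial case $K=\Om$ and omitting the explicit closedness check you supply.
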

\begin{proof}
If $\Prop(x,u)=\Om$ the conclusion is true. Otherwise, for all $z\in\partial \Prop(x,u)\cap \Om$  Proposition \ref{prop11} implies $Z_i(z)\cdot\nu=0$ for all  $\nu\bot \Prop(x,u)$ at $z$ and $i=1,\dots,m$. Then Theorem \ref{invariance} ensures the invariance of $\Prop(x,u)$ for the system \eqref{S_0}, and therefore all trajectories starting at $x$ remain forever in $\Prop(x,u)$.
\end{proof}
\begin{cor}[Strong Maximum Principle]
\label{SMaxP2} 
In addition to the assumptions of Theorem \ref{teo:main} suppose the system \eqref{S_0} satisfies the bounded time controllability property \eqref{BTC}. Then 
$u$ is constant. 
\end{cor}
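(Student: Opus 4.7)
The plan is essentially a direct combination of Theorem \ref{teo:main} with the controllability hypothesis. Suppose $u$ attains a nonnegative maximum at some point $x_0\in\Omega$, so that $\Prop(x_0,u)\neq\emptyset$. We want to show $\Prop(x_0,u)=\Omega$, which immediately gives $u\equiv u(x_0)=\max_\Omega u$, hence constant.

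To this end, fix an arbitrary $x_1\in\Omega$. By the bounded time controllability property \eqref{BTC}, there exist $s\geq 0$ and a trajectory $y:[0,s]\to\Omega$ of the control system \eqref{S_0}, corresponding to some admissible control $\beta\in\mathcal{B}$, such that $y(0)=x_0$, $y(s)=x_1$, and $y(t)\in\Omega$ for all $t\in[0,s]$. In particular $x_1=y_{x_0}(s,\beta)$ is reachable from $x_0$ by the system \eqref{S_0} while staying in $\Omega$. Applying Theorem \ref{teo:main} we get $x_1\in\Prop(x_0,u)$, that is, $u(x_1)=u(x_0)=\max_\Omega u$. Since $x_1$ was arbitrary, $u$ is constant in $\Omega$.

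The only point that requires care is to make sure that the trajectory supplied by \eqref{BTC} is truly admissible for Theorem \ref{teo:main}, i.e.\ that its control lies in the class $\mathcal{B}$ used in that theorem; this is built into the statement of \eqref{BTC} (the controls are measurable and take values in a fixed neighborhood of the origin, hence after a harmless rescaling can be assumed to satisfy $\sum_i\beta_i^2\leq 1$, which does not affect the reachable set but only the traversal time). With this cosmetic matching in place, the argument reduces to a one-line application of Theorem \ref{teo:main}, so there is no substantial obstacle: the whole content has already been packed into the propagation statement and into the controllability hypothesis.
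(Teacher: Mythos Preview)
Your proof is correct and follows essentially the same approach as the paper: both reduce the claim to a direct application of Theorem \ref{teo:main}, using \eqref{BTC} to reach any point of $\Omega$ from the maximum point $x_0$, so that $\Prop(x_0,u)=\Omega$. Your closing remark about matching the control class with $\mathcal{B}$ via rescaling is a small point the paper glosses over, but it does not change the argument.
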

\begin{proof}
If (BTC) holds then any point of $\Om$ is reachable by the system \eqref{S_0} starting at $x$. Then Theorem \ref{teo:main} gives $\Prop(x,u)=
\Omega$.
\end{proof}
Before proving Corollary \ref{SMaxP} we recall that the classical H\"ormander condition requires that 

\smallskip
\noindent (H) \emph {the vector fields $Z_{i}$
, $i=1,...,m$, are $C^\infty$ and the 
 Lie algebra generated by them has full rank $d$ at each point of $\Om$.}
 \smallskip
 
 \noindent
  The smoothness requirement on $Z_{i}$ can be reduced to $C^k$ for a suitable $k$ and considerably more if the Lie brackets are interpreted in a generalized sense, see \cite{FR} and the references therein.
\begin{proof} [Proof of Corollary \ref{SMaxP}]
By the classical Chow-Rashevskii theorem in sub-Riemannian geometry and its control-theoretic version (see, e,g, \cite[Lemma IV.1.19]{BCD}), for any $z\in\Om$ the set of points reachable from $z$ by the system contains a neighborhood of $z$. Since $u\in \USC(\Om)$, $K=\Prop(x,u)=\{y\in \Om : u(y)
 = \max u\}$ is relatively closed. 
Then $\Om$ connected implies that either $K=\Om$ or $K$ is not relatively open. In the latter case there would be $z\in K$ with no neighborhood contained in $K$, a contradiction with Theorem \ref{teo:main}. Then $K=\Om$.
\end{proof}
\begin{rem}
Note that the existence of a SV at $x$ for $F$ and the scaling property (ii) imply
 \[
 \limsup_{(s,p,X)\to (0,0,0)} F(x,s, p, X) \geq 0 \,,
 \]
 a weaker condition than  $F(x,0,0,0) \geq 0$   used in \cite{KK}.
\end{rem}
\begin{rem}
\label{gen scal}
It is easy to see from the proof of Proposition \ref{prop11} that the function $\phi$ in the scaling property (ii) can be allowed to depend also on $x, s, p$, and $X$. What is really needed is that $F(x,s,p,X)>0$ implies $F(x,\xi s,\xi p,\xi X)>0$ for all $\xi\in(0,1]$ and all $x,s,p,X$.
\end{rem}
\begin{rem}
In all the previous results the 
scaling assumption (ii) on  $F$  can be avoided 
if there is $\tilde{F}$ satisfying all conditions 
and approximating $F$  in the sense that
\begin{equation*}
F(x,\epsilon s, \epsilon p,\epsilon X)\geq\tilde{F}(x,\epsilon s,\epsilon p,\epsilon X)+\phi(\epsilon)\psi(\epsilon)\ 
\end{equation*}
with $\lim_{\epsilon\rightarrow0^+}\psi(\epsilon)=0$. Indeed, in the proof of Proposition \ref{prop11} one can see that \eqref{cont} still holds 
under this assumption (cfr. \cite{BDL1}). 
\end{rem}
We end with section with
\begin{lemma}[Hopf boundary lemma] 
Let $U\subseteq\Omega$ be an open set, $x_0\in\partial U$, $u\in\mathrm{USC}(U\cup\{x_0\})$ be a viscosity subsolution of \eqref{1} in $U$ such that
\begin{itemize}
\item[(a)] $u(x_0)>u(x)$ for every $x\in U$ and $u(x_0)\geq0$;
\item[(b)] there exists a ball $B:=B(y,R)$ such that $B\subseteq U$ and $\overline{B}\cap \partial U=\{x_0\}$. 
\end{itemize}
Assume that $F$ satisfies (i)-(ii) and there exists a SV $Z$ for $F$ such that $p:=x_0-y$ satisfies $p\cdot Z\neq 0$. Then, for any $w\in\R^d$ such that $w\cdot p<0$, we have
\[
\limsup_{\tau\to 0^+}\frac{u(x_0+\tau w)-u(x_0)}{\tau}<0
\]
\end{lemma}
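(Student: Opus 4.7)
The plan is to adapt the barrier constructed in Step 1 of the proof of Proposition \ref{prop11} to the present situation, where the ball of subunit-direction attack now sits \emph{inside} $U$ and touches $\partial U$ at $x_0$, and then run a comparison argument on a crescent-shaped region near $x_0$.

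First I would build the barrier. Let $p:=x_0-y$, so $|p|=R$ and $Z\cdot p\ne 0$ by assumption. Define
\[
v(x):=e^{-\gamma R^{2}}-e^{-\gamma|x-y|^{2}},
\]
which satisfies $v(x_0)=0$, $-1<v<0$ in $B(y,R)$, $v>0$ outside $\overline{B(y,R)}$, and
\[
Dv(x_0)=2\gamma e^{-\gamma R^{2}}p,\qquad D^{2}v(x_0)=2\gamma e^{-\gamma R^{2}}\bigl(I-2\gamma p\otimes p\bigr).
\]
The SV property of $Z$ at $x_0$ applied to the vector $p$ (with $Z\cdot p\ne 0$) yields some $\gamma>0$ with $F(x_0,0,p,I-2\gamma p\otimes p)>0$; combined with scaling (ii) and the lower semicontinuity of $F$, this is exactly the argument of Step~1 of the proof of Proposition~\ref{prop11} and gives $r\in(0,R)$ such that
\[
F(x,v(x),Dv(x),D^{2}v(x))\geq C>0 \qquad \forall\, x\in B(x_0,r).
\]

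Next I would run the comparison on $X:=B(x_0,r)\cap B(y,R)$. Its boundary has two pieces: on $\partial B(y,R)\cap\overline{B(x_0,r)}$ one has $v\equiv 0$ and $u\leq u(x_0)$ by (a), so $u-u(x_0)\leq 0=\epsilon v$; on $\partial B(x_0,r)\cap\overline{B(y,R)}$ the strict inequality in (a) together with upper semicontinuity gives $u-u(x_0)\leq -\delta$ for some $\delta>0$, and since $v$ is bounded one chooses $\epsilon\in(0,1]$ so small that $\epsilon|v|\leq\delta$ there, hence $u-u(x_0)\leq\epsilon v$ on all of $\partial X$. To propagate this to $X$, I argue by contradiction: if $\bar x\in X$ realizes $\max_{\overline X}(u-u(x_0)-\epsilon v)>0$, then $u(x_0)+\epsilon v$ touches $u$ from above at $\bar x$, with $D(\epsilon v)(\bar x)\ne 0$ because $y\notin\overline{B(x_0,r)}$ since $r<R$. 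The viscosity subsolution property gives
\[
F(\bar x,u(\bar x),\epsilon Dv(\bar x),\epsilon D^{2}v(\bar x))\leq 0.
\]
Since $u(x_0)\geq 0$, we have $u(\bar x)>u(x_0)+\epsilon v(\bar x)\geq\epsilon v(\bar x)$, so properness (i) yields
\[
F(\bar x,\epsilon v(\bar x),\epsilon Dv(\bar x),\epsilon D^{2}v(\bar x))\leq 0,
\]
and then scaling (ii), applicable because $v(\bar x)\in[-1,0]$ and $\epsilon\in(0,1]$, gives
\[
\phi(\epsilon)F(\bar x,v(\bar x),Dv(\bar x),D^{2}v(\bar x))\leq 0,
\]
contradicting the positive lower bound from Step~1. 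Hence $u(x)-u(x_0)\leq \epsilon v(x)$ throughout $\overline X$, with equality at $x_0$.

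Finally, for $w\in\R^{d}$ with $w\cdot p<0$, the expansion $|x_0+\tau w-y|^{2}=R^{2}+2\tau\, p\cdot w+\tau^{2}|w|^{2}$ shows that $x_0+\tau w\in B(y,R)\cap B(x_0,r)=X$ for all sufficiently small $\tau>0$. Since $v(x_0)=0$, the inequality $u(x_0+\tau w)-u(x_0)\leq\epsilon v(x_0+\tau w)$ gives, after dividing by $\tau$ and passing to the upper limit,
\[
\limsup_{\tau\to 0^{+}}\frac{u(x_0+\tau w)-u(x_0)}{\tau}\leq \epsilon\, Dv(x_0)\cdot w=2\epsilon\gamma e^{-\gamma R^{2}}\,p\cdot w<0,
\]
which is the claim. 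The main delicate step is the contradiction argument: one must keep track simultaneously of the monotonicity step (which uses $u(x_0)\geq 0$ to reduce $u(\bar x)$ to $\epsilon v(\bar x)$) and of the sign/range conditions $\epsilon\in(0,1]$ and $v(\bar x)\in[-1,0]$ that make the scaling property (ii) available; everything else is a routine repetition of the barrier analysis already carried out in Proposition~\ref{prop11}.
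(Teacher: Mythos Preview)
Your proof is correct and follows essentially the same route as the paper's: the same exponential barrier from Step~1 of Proposition~\ref{prop11}, the same crescent region $X=B(x_0,r)\cap B(y,R)$, the comparison $u-u(x_0)\leq\epsilon v$ obtained by the contradiction argument of Step~2, and the same directional-derivative estimate at the end. Your write-up is in fact more careful than the paper's terse proof (you make explicit the boundary splitting, the use of $u(x_0)\geq 0$ in the properness step, and the verification that $x_0+\tau w\in X$ for small $\tau$), but the underlying argument is identical.
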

\begin{proof}
 As in Step 1 of Proposition \ref{prop11} we define $v$ as in
 \eqref{v}, which turns out to be a strict classical supersolution in $\overline{X}:=B\cap B(x_0,r)$ for a suitably small $r>0$ because $p\cdot Z\neq 0$. Then, arguing as in Step 2 of Proposition \ref{prop11} one proves that $u(x)-u(x_0)\leq \epsilon v(x)$ for every $x\in \overline{X}$.
 To conclude, it is then sufficient to observe that, for any $w\in\R^d$ such that $w\cdot p<0$, one has
\[
\limsup_{\tau\to 0^+}\frac{u(x_0+\tau w)-u(x_0)}{\tau}\leq \eps Dv(x_0)\cdot w=2\gamma e^{-\gamma|x_0-y|^2}p\cdot w<0\ .
\]
\end{proof}
\subsection{Propagation of minima}
\label{sec:min}
Various  Strong Minimum Principles for (viscosity) supersolutions of \eqref{1}  can be easily derived from the results of the previous section by recalling that 
$v\in\LSC(\Om)$ is a supersolution of \eqref{1}  if and only if $u=-v$ is a subsolution of 
\[
-F(x,-u,-Du, -D^2v) = 0 \quad\text{ in }\; \Om .
\]
Therefore one can read properties of the minima of $v$ from the preceding results by applying them to $u$ and 
\[
F^-(x, r, p, X) := -F(x, -r, -p, -X) .
\]
Let us make explicit the assumptions on $F$ that imply a Strong Minimum Principle. First we
 replace (i)-(ii) by
\begin{itemize}
\item[(i')] $F:\Omega\times\R^d\times\R^d\backslash\{0\}\times\Sym_d\to \R$ is upper semicontinuous and proper.
\item [(ii')] 
For some $\phi>0$ the operator satisfies $F(x,\xi s,\xi p,\xi X)\leq\ \phi(\xi)F(x,s,p,X)$ for all $\xi\in(0,1]$ and $s\in [0, 1]$.
\end{itemize}
Moreover, a vector $Z$ is a subunit vector for $F^-$ at $x$ if and only if 
\begin{equation}
\label{subunit-}
\inf_{\gamma>0} F(x,0,p,\gamma p\otimes p-I)< 0 \quad \forall p\in \R^{d} \;\text{ such that } \; Z\cdot p\ne 0 .
\end{equation}
Now we can easily get the following properties of minima.
\begin{cor}\label{cor:min}
Let $v\in\LSC(\Om)$ be a viscosity supersolution of \eqref{1} that achieves a nonnegative minimum at $x
\in\Omega$. Assume that {\upshape (i')-(ii')} hold and  $Z_{i} : \overline\Om\to \R^d$, $i=1,...,m$, are  locally Lipschitz 
 subunit vector fields of $F^-$, i.e., at each $x\in \Om$   $Z_{i}(x)$ verifies \eqref{subunit-}.
Then 
$v(y)=v(x)=\min_\Om v$ for all points $y$ reachable by the system \eqref{S_0} starting at $x$.
\end{cor}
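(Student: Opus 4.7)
The plan is to deduce Corollary \ref{cor:min} directly from Theorem \ref{teo:main} via the involution $u := -v$, following the strategy outlined in the opening of Section \ref{sec:min}.

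A standard manipulation of the viscosity definitions---using $-(v-\varphi) = u - (-\varphi)$ together with the supersolution inequality for $v$---shows that $u \in \USC(\Om)$ is a viscosity subsolution of the dual equation $F^-=0$ in $\Om$, with $F^-(x,r,p,X) := -F(x,-r,-p,-X)$, and that $u$ attains its maximum at $x$, with $\max_\Om u = -\min_\Om v$.

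The remaining task is to check that $(F^-, Z_1, \dots, Z_m)$ meets the hypotheses of Theorem \ref{teo:main}. Lower semicontinuity and properness of $F^-$ follow at once from (i') by substitution. The scaling (ii) for $F^-$ on $s \in [-1,0]$ is obtained from (ii') for $F$ applied at $s' := -s \in [0,1]$:
\[
F^-(x,\xi s,\xi p,\xi X) = -F(x,\xi s',-\xi p,-\xi X) \geq -\phi(\xi)\, F(x,s',-p,-X) = \phi(\xi)\, F^-(x,s,p,X).
\]
Moreover, the condition \eqref{subunit-} on each $Z_i$ coincides, after the substitution $q := -p$, with the condition that $Z_i$ be a subunit vector field for $F^-$ in the sense of Definition \ref{subunit} (the sign flip in the Hessian and the switch from $\sup$ to $\inf$ exactly compensate for the sign flip in $F^-$).

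Applying Theorem \ref{teo:main} to $u$ then yields $u(y)=u(x)$ at every point $y \in \Om$ reachable from $x$ by the system \eqref{S_0}; translating back via $u=-v$ gives the claimed propagation of the minimum. The main point deserving care is the bookkeeping of signs---Theorem \ref{teo:main} is phrased in terms of a nonnegative maximum---but (i')-(ii') together with the flipped inequality in \eqref{subunit-} are tailored precisely so that the duality transports the extremum hypothesis on $v$ into the required hypothesis on $u$, and no new analytic input is needed beyond that already deployed in Proposition \ref{prop11}.
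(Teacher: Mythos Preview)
Your approach---reducing to Theorem \ref{teo:main} via the involution $u=-v$ and checking (i), (ii), and the subunit condition for $F^-$---is exactly what the paper intends; the corollary is stated without proof, immediately after the duality is set up in the opening of Section \ref{sec:min}. Your verification of (i), (ii), and of the equivalence between \eqref{subunit-} and Definition \ref{subunit} for $F^-$ (via $q=-p$ and $p\otimes p=(-p)\otimes(-p)$) is correct.

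There is, however, one sign discrepancy that you paper over rather than address. You assert that the duality ``transports the extremum hypothesis on $v$ into the required hypothesis on $u$,'' but it does not: if $v$ attains a \emph{nonnegative} minimum, then $\max_\Om u = -\min_\Om v \le 0$, so $u$ attains a \emph{nonpositive} maximum, whereas Theorem \ref{teo:main} (through Proposition \ref{prop11}) requires the maximum to be nonnegative. That nonnegativity is genuinely used in the proof of Proposition \ref{prop11}: once to conclude that $u-u(z)$ is still a subsolution (properness with $u(z)\ge 0$), and once in the final chain $\phi(\epsilon)F(z,0,\dots)\le F(z,u(z),\dots)\le 0$. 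Neither step goes through if $u(z)<0$.

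This is almost certainly a slip in the paper's statement---the hypothesis that dualizes correctly is a \emph{nonpositive} minimum, and with that correction your argument is complete. But since you explicitly flag the sign bookkeeping as ``the main point deserving care'' and then claim it works, you should either note the needed correction or explain why the sign condition on the extremum can be dropped (it cannot, in general, when $F$ depends on $r$).
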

\begin{cor}[Strong Minimum Principle]\label{cor:min} 
In addition to the assumptions of Corollary \ref{cor:min} suppose the system \eqref{S_0} satisfies the bounded time controllability property \eqref{BTC}. Then 
$v$ is constant. 
This holds in particular if the fields $Z_i$, $i=1,...,m$, verify the H\"ormander condition.
\end{cor}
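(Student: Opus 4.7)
The plan is to mirror the proof of Corollary \ref{SMaxP} in the supersolution setting, leaning entirely on the propagation of minima established in the preceding Corollary. I would \emph{not} try to rederive anything by transforming $v\mapsto -v$ at this stage, since the sign bookkeeping is already absorbed in the passage from $F$ to $F^-$ carried out one statement earlier; instead I would take the propagation statement as a black box and do only the topology.

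For the main assertion, introduce $K:=\{y\in\Omega:v(y)=v(x)=\min_\Omega v\}$. The preceding Corollary says that $K$ contains every point reachable from $x$ by a trajectory of the control system \eqref{S_0}. Under \eqref{BTC}, every point of $\Omega$ is reachable from $x$, so $K=\Omega$ and $v$ is constant.

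For the final clause, assume the $Z_i$ satisfy the H\"ormander condition. By the Chow--Rashevskii theorem in its control-theoretic version (cf.\ \cite[Lemma IV.1.19]{BCD}), the set of points reachable from any $z\in\Omega$ by \eqref{S_0} contains an open neighborhood of $z$; in particular \eqref{BTC} holds locally around every point. Since $v\in\LSC(\Omega)$, the set $K$ is relatively closed in $\Omega$. Applying the propagation statement with initial point any $z\in K$ shows that $K$ contains a full neighborhood of $z$, so $K$ is also relatively open. The connectedness of $\Omega$, together with $x\in K\ne\emptyset$, yields $K=\Omega$.

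There is essentially no obstacle: the argument is the exact analogue of the proof of Corollary \ref{SMaxP}, with the role of $\USC(\Omega)$ replaced by $\LSC(\Omega)$ so that the level set $\{v=\min_\Omega v\}$ is still relatively closed. The only thing to be careful about is to invoke the correct preceding corollary (propagation of minima), whose hypotheses (i'), (ii') and the variant \eqref{subunit-} are precisely what the statement being proved already assumes.
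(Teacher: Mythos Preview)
Your proposal is correct and follows exactly the route the paper intends: the paper gives no explicit proof for this corollary, but your argument is the verbatim analogue of the proofs of Corollary~\ref{SMaxP2} (the \eqref{BTC} case) and Corollary~\ref{SMaxP} (the H\"ormander case), with $\USC$ replaced by $\LSC$ so that $\{v=\min_\Omega v\}$ is still relatively closed. Nothing to add.
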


\section{Some applications}
\label{sec:appl}

\subsection{
Fully Nonlinear Subelliptic Equations}
\label{sec:sube}
Our main application concerns fully nonlinear subelliptic equations. In this framework one is given a family $\mathcal{X}=(X_1,...,X_m)$ of $C^{1,1}$ vector fields defined in $\overline{\Omega}$. 
 The intrinsic gradient and intrinsic Hessian  are defined as $D_{\mathcal{X}}u=(X_1 u,...,X_m u)$ and $(D_{\mathcal{X}}^2 u)_{ij} =X_i (X_j u)$. 
After choosing a base in Euclidean space we write $X_j=\sigma^j\cdot D$, with $\sigma^j:\overline{\Omega}\rightarrow\R^d$, and $\sigma=\sigma(x)=[\sigma^1(x),...,\sigma^m(x)]\in\R^{d\times m}$. 
Then
\begin{equation*}
D_{\mathcal{X}}u=\sigma^T Du=(\sigma^1\cdot Du,...,\sigma^m\cdot Du)
\end{equation*}
and
\begin{equation*}
X_i (X_j u)=(\sigma^T D^2 u\ \sigma)_{ij} +(D\sigma^j\ \sigma^i)\cdot Du\ .
\end{equation*}
Denote by $Y^*$ the symmetrized matrix of $Y$. By the chain rule (see, e.g., \cite[Lemma 3]{BBM}) one can obtain that for $u\in C^2$
\begin{equation*}
(D_{\mathcal{X}}^2 u)^*=\sigma^T D^2 u\sigma+
g(x,Du)\ ,
\end{equation*}
where the correction term $g$ is
\begin{equation*}
(g(x,P))_{ij}=\frac12[(D\sigma^j\ \sigma^i)\cdot p+(D\sigma^i\ \sigma^j)\cdot p]\ .
\end{equation*}
Then the subelliptic equation \eqref{subell} 
can be written as
\begin{equation}
\label{subell2}
G(x,u,\sigma^T(x) Du,\sigma^T(x) D^2 u\sigma(x)+g(x,Du))=0\ ,
\end{equation}
which is of the form \eqref{1} if we define
\begin{equation}\label{euclidean}
F(x,r,p,X):= G(x,r,\sigma^T(x) p,\sigma^T(x)X\sigma(x)+g(x,p))\ .
\end{equation}
\begin{lemma} 
If $G$ satisfies properties {\upshape (i), (ii)} and \eqref{elli} of Section \ref{intro}, then $F$ satisfies properties {\upshape (i)} and {\upshape (ii)} and  the vector fields $
\sigma^i$ 
 are subunit 
  for $F$ in the sense of Definition \ref{subunit}.
\end{lemma}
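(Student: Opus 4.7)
The plan is to verify in turn (i), (ii), and the subunit property, exploiting the fact that the correction term $g(x,p)$ arising in \eqref{subell2} is \emph{linear} in $p$ (and continuous in $x$, since $\sigma$ is $C^{1,1}$).

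First I would check (i). Lower semicontinuity of $F$ follows from that of $G$ by composition with the continuous maps $p \mapsto \sigma^T(x) p$, $X \mapsto \sigma^T(x) X \sigma(x)$, and $(x,p) \mapsto g(x,p)$. For properness, given $r \leq s$ and $Y \leq X$, conjugation by $\sigma(x)$ preserves the semidefinite order, so $\sigma^T Y \sigma + g(x,p) \leq \sigma^T X \sigma + g(x,p)$; applying properness of $G$ in its second and fourth arguments then yields $F(x,r,p,X) \leq F(x,s,p,Y)$.

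Second, for the scaling (ii), the linearity $g(x,\xi p) = \xi g(x,p)$ together with the assumed scaling on $G$ (with the same function $\phi$) give
\[
F(x,\xi s,\xi p,\xi X) = G\!\left(x,\xi s,\xi \sigma^T(x)p,\xi\bigl(\sigma^T(x) X \sigma(x) + g(x,p)\bigr)\right) \geq \phi(\xi)\, F(x,s,p,X)
\]
for all admissible $\xi$, $s$, $p$, $X$.

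Third, to show that $\sigma^i(x)$ is a subunit vector for $F$ at $x$, I would fix $p \in \R^d$ with $\sigma^i(x) \cdot p \neq 0$ and set $q := \sigma^T(x) p \in \R^m$. Then $q_i = \sigma^i(x) \cdot p \neq 0$, so in particular $q \neq 0$. A direct substitution gives
\[
F(x,0,p, I - \gamma p \otimes p) = G\bigl(x,0,q,\, X_0 - \gamma\, q \otimes q\bigr), \qquad X_0 := \sigma^T(x) \sigma(x) + g(x,p),
\]
and applying the nondegeneracy hypothesis \eqref{elli} for $G$ at the triple $(x,q,X_0)$ with $q\neq 0$ yields $\sup_{\gamma>0} F(x,0,p, I - \gamma p \otimes p) > 0$, which is exactly the condition in Definition \ref{subunit}.

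The arguments are essentially algebraic substitutions, so no serious obstacle is expected. The only mild point to keep in mind is the locus where $\sigma^T(x) p = 0$ although $p \neq 0$, on which $G$ is not defined; this does not affect the subunit check (the vector $q$ is nonzero by construction) and does not affect the equivalence of the two formulations in the viscosity subsolution definition either, since test functions there are restricted to those with $\sigma^T(x) D\varphi(x) \neq 0$.
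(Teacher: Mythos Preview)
Your proof is correct and follows essentially the same approach as the paper's: properness via conjugation by $\sigma$, scaling via the $1$-homogeneity of $g(x,\cdot)$, and the subunit property by setting $q=\sigma^T(x)p$, observing $\sigma^T(I-\gamma p\otimes p)\sigma+g=X_0-\gamma\,q\otimes q$, and invoking \eqref{elli} with $X_0=\sigma^T\sigma+g$. You add welcome extra detail on lower semicontinuity and the locus $\sigma^T p=0$, but the core argument is identical to the paper's.
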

\begin{proof} (i) holds 
because $X\leq Y$ implies 
$\sigma^T(x)X\sigma(x)\leq \sigma^T(x)Y\sigma(x)$, so  $F$ is proper.

  (ii) holds for $F$ if it does for $G$ because $g(x,p)$ is positively 1-homogeneous in the variable $p$.
  

To prove that any $X_i$ is SV for $F$ we use property  \eqref{elli} of  $G$ with $q=\sigma^T(x) p$, $X=\sigma^T\sigma + g$ to get
\begin{
equation*}
F(x,0,p,I-\gamma p\otimes p)=
G(x,0,\sigma(x)^T p,\sigma^T(x)I\sigma(x)-\gamma(\sigma^T(x)p)\otimes (\sigma^T(x) p)+g(x,p)) > 0
\end{
equation*}
for some $\gamma>0$ if $\sigma^i(x) \cdot p\ne 0$. 
\end{proof} 
%
This Lemma and Theorem \ref{teo:main} give the following propagation of maxima and SMP.
\begin{cor}
\label{Propa:sub}
Assume $G$ verifies {\upshape(i), (ii)}, and  \eqref{elli}, and  let $u$ be a subsolution of \eqref{subell} or, equivalently, \eqref{subell2}, attaining a maximum at $x\in\Om$.
Then $\Prop(x,u)$ contains all the points reachable from $x$ by the system \eqref{S_0} with $Z_i = X_i$. In particular if the property \eqref{BTC} holds for such system then $u$ is constant.
\end{cor}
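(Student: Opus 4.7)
The plan is to combine the preceding Lemma with Theorem \ref{teo:main} and Corollary \ref{SMaxP2}; the Corollary is essentially a direct corollary of those two results once the subelliptic equation has been rewritten in Euclidean form. The first step is to recall that \eqref{subell} is equivalent, via the chain-rule identity $(D_{\mathcal{X}}^2 u)^* = \sigma^T D^2 u\,\sigma + g(x,Du)$, to the Euclidean equation \eqref{subell2} with $F$ defined by \eqref{euclidean}. Since the columns $\sigma^i$ are $C^{1,1}$, this equivalence extends to the viscosity setting: if $\varphi\in C^2$ touches $u$ from above at $x_0$ with $D\varphi(x_0)\ne 0$, applying the chain rule to $\varphi$ transfers the subsolution inequality for $G$ into one for $F$, and vice versa. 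So $u$ is a viscosity subsolution of \eqref{subell} if and only if $u$ is a viscosity subsolution of $F(x,u,Du,D^2u)=0$ in the sense used in Section \ref{prelim}.

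Next I would invoke the Lemma just proved to conclude that this $F$ satisfies the standing assumptions (i) and (ii) and, crucially, that each $\sigma^i$ (i.e., the Euclidean representative of $X_i$) is a subunit vector field for $F$ in the sense of Definition \ref{subunit}. The assumed $C^{1,1}$ regularity of the $X_i$ supplies the locally Lipschitz hypothesis on the $Z_i = X_i$ required by Theorem \ref{teo:main}.

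With these ingredients in place, Theorem \ref{teo:main} yields the first conclusion: if $u$ attains a (nonnegative) maximum at $x\in\Omega$, then every point $y=y_x(t,\beta)$, $t>0$, $\beta\in\mathcal{B}$, reachable from $x$ by the control system \eqref{S_0} with $Z_i=X_i$ lies in $\Prop(x,u)$. The second assertion is then exactly Corollary \ref{SMaxP2}: under \eqref{BTC}, every point of $\Omega$ is reachable, hence $\Prop(x,u)=\Omega$ and $u$ is constant.

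The only delicate checkpoint, and thus the main place where attention is needed, is the viscosity equivalence between \eqref{subell} and \eqref{subell2}: the natural singular set in the intrinsic formulation is $\{D_{\mathcal{X}}\varphi=\sigma^T D\varphi=0\}$, while in the Euclidean form \eqref{1} the singular set is $\{D\varphi=0\}$, and these do not coincide when $\sigma$ fails to have full row rank. One resolves this by observing that \eqref{euclidean} defines $F$ unambiguously for every $p\ne 0$ (including $p$'s with $\sigma^T p=0$, using that $G$ is only required to verify \eqref{elli} for arbitrary $q\in\R^d$ and $X\in\Sym_m$), so the test condition $D\varphi\ne 0$ used in Theorem \ref{teo:main} is the correct one, and the propagation conclusion applies without modification.
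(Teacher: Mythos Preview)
Your proposal is correct and follows essentially the same approach as the paper: invoke the preceding Lemma to verify that $F$ defined by \eqref{euclidean} satisfies (i), (ii) and has the $X_i=\sigma^i$ as locally Lipschitz subunit vector fields, then apply Theorem \ref{teo:main} and Corollary \ref{SMaxP2}. One small caveat on your final paragraph: $G$ is declared on $\R^m\setminus\{0\}$ (and \eqref{elli} is stated for $q\ne 0$), so your claim that $F$ is ``defined unambiguously for every $p\ne 0$'' is not quite justified when $\sigma^T p=0$; however this does not affect the argument, since Proposition \ref{prop11} and Theorem \ref{teo:main} only ever evaluate $F$ at points where a subunit-vector condition $\sigma^i\cdot p\ne 0$ (hence $\sigma^T p\ne 0$) is in force.
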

From this we get immediately the Strong Maximum Principle for subelliptic equations with the H\"ormander condition, Corollary \ref{SMPsub}, as in the proof of Corollary \ref{SMaxP}.
\begin{ex}
A very simple example  in $\R^2$ of vector fields that fail to span  all $\R^2$ at some point but satisfy the H\"ormander condition are the Grushin vector fields, namely, 
\[
\sigma_{\mathcal{G}}(x)=\begin{pmatrix}1 & 0\\0 & x_1\end{pmatrix} \ .
\]
In this case the symmetrized horizontal hessian is given by
\[
(D^2_{\mathcal{X}}u)^*=\sigma^T_{\mathcal{G}}(x)D^2u\sigma_{\mathcal{G}}(x)+g(x,Du)=\begin{pmatrix} u_{x_1x_1}& x_1u_{x_1x_2}+\frac{u_{x_2}}{2}\\ x_1u_{x_1x_2}+\frac{u_{x_2}}{2}& x_1^2u_{x_2x_2}\end{pmatrix} \ .
\]
\end{ex}
\begin{ex}
\label{heis}
The most studied examples of vector fields satisfying the H\"ormander condition are the generators of a Carnot group: see the treatise \cite{BLU} for a comprehensive introduction and for the theory of linear subelliptic equations in such groups. The 
simplest prototype of Carnot group is the  Heisenberg group $\mathbb{H}^1$ in $\R^3$ whose generators are 
\[
\sigma_{\He^1}(x)=\begin{pmatrix}1 & 0\\0 & 1\\ 2x_2 & -2x_1\end{pmatrix}\ .
\]
Here the correction term of the Hessian is $g\equiv 0$, and this occurs for all groups of step 2. An example of Carnot group of step 3 where $g(x,p)\neq 0$ is the Engel group, 
see e.g. \cite[Example 3]{BBM}.

\end{ex}
Next we list some examples of equations 
 of the form 
\begin{equation}\label{modeleq}
c(x)|u|^{k-1}u-a(x)E(
D_{\mathcal{X}}u,(D^2_{\mathcal{X}}u)^*)=0
\end{equation}
where we assume $E : 
 \R^d\backslash\{0\}\times\R^{m\times m}$ is positively homogeneous of degree $\alpha\geq 0$
 , $c, a$ are continuous and satisfy
\begin{equation}\label{coeff}
c\geq0\ ,\, a>0\ ,
\; \text{ and }\; 
\text{ either }c=0\text{ or }\alpha\leq k\ ,k>0\ .
\end{equation}
We give some examples of operators $E$ 
for which the SMP and Strong Minimum Principle for equation \eqref{modeleq} are known to hold in the Euclidean case, i.e., if the fields $\mathcal{X}$ are the canonical basis of $\R^d$, see \cite{BDL1}. Our contribution is that they hold for H\"ormander vector fields as well.
\begin{ex}
The 
 \emph{subelliptic $\infty$-Laplacian} \cite{B, BCAMS, Wa} is
\[
-\Delta_{\mathcal{X},\infty}u=- D_{\mathcal{X}}u \cdot (D^2_{\mathcal{X}}u)^*D_{\mathcal{X}}u \,
\]
where $E=-p\cdot Xp$ is homogeneous of degree $\alpha=3$ and 
 \eqref{elli} 
 is satisfied because
\[
E(q,X-\gamma q\otimes q)=-q\cdot Xq+\gamma |q|^4\ .
\]
Note that the associated operator $F$ satisfies also the condition \eqref{subunit-}. Then the equation \eqref{modeleq} with $E$ the $\infty$-Laplacian satisfies both the SMP and the Strong Minimum Principle.
\end{ex}
\begin{ex}
A generalization of the previous example (considered in \cite{BMartin2} for the evolutive case) is
\begin{equation*}
-\Delta_{\mathcal{X},\infty}^hu=-|D_{\mathcal{X}}u|^{h-3}(D^2_{\mathcal{X}}u)^*D_{\mathcal{X}}u\cdot D_{\mathcal{X}}u
\end{equation*}
with $h\geq 0$, where $E$ is homogeneous of degree $h$ and 
satisfies  \eqref{elli} because
\[
E(q,X-\gamma q\otimes q)=E(q,X)+\gamma|q|^{h+1}\ .
\]
\end{ex}
\begin{ex}
The 
 \emph{subelliptic $m$-Laplacian}, $m>1$, is
\[
-\Delta_{\mathcal{X},m}u:=-\mathrm{div}_{\mathcal{X}}(|D_{\mathcal{X}}u|^{m-2}D_{\mathcal{X}}u)
 = -(|D_{\mathcal{X}}u|^{m-2}\Delta_{\mathcal{X}}u+(m-2)|D_{\mathcal{X}}u|^{m-4}\Delta_{\mathcal{X},\infty}u) \,
 \]
where $\Delta_{\mathcal{X}}u:=\text{Tr} (D^2_{\mathcal{X}}u)$ is the sub-Laplacian. Here $E$ is homogeneous of degree $\alpha=m-1$ and 
 \eqref{elli} holds because
\[
E(q,X-\gamma q\otimes q)=E(q,X)+\gamma|q|^m(m-1) .
\]
Similarly one checks \eqref{subunit-}. Recently the SMP and a Strong Comparison Principle  were proved in \cite{Capogna} for weak $C^1$ solution of similar equations involving the subelliptic $m$-Laplacian. 

Since the $m$-Laplacian is in divergence form the natural notion of solution for $-\Delta_{\mathcal{X},m}u = 0$ is variational. The equivalence of solutions in Sobolev spaces with viscosity solutions was shown by Bieske \cite{Bi12} in Carnot groups. For this homogeneous equation the SMP can also be deduced from the Harnack inequality, see the references in \cite{Capogna}.
\end{ex}
\begin{ex}
For fixed $0<\lambda\leq\Lambda$, the \textit{Pucci's extremal operators} on symmetric matrices $M\in\Sym_{m}$ are
\begin{equation}
\label{7}
\mathcal{M}^{+}
(M)
:= -\lambda\sum_{e_{k}>0}e_{k}-\Lambda\sum_{e_{k}<0}e_{k} = \sup
 \{ -\mathrm{Tr}(AM) : A\in\Sym_{d} ,\, \lambda I\leq A \leq \Lambda I \} \,
 \end{equation}
\begin{equation}
\label{6}
\mathcal{M}^{-}(M)=-\Lambda\sum_{e_{k}>0}e_{k}-\lambda\sum_{e_{k}<0}e_{k} = \inf\{ -\mathrm{Tr}(AM) : A\in\Sym_{d} ,\, \lambda I\leq A \leq \Lambda I \}  .
\end{equation}
They are $1$-homogeneous and satisfy \eqref{elli} because
\[
\mathcal{M}^{+}(X-\gamma q\otimes q)\geq \mathcal{M}^{-}(X-\gamma q\otimes q)\geq \mathcal{M}^{-}(X)-\lambda\gamma |q|^2 .
\]
If we take a \emph{subelliptic Pucci's operator} 
$
E(
(D^2_{\mathcal{X}}u)^*)=\mathcal{M}^{+}((D^2_{\mathcal{X}}u)^*) 
$ 
then the equation \eqref {modeleq} satisfy the SMP and the Strong Minimum principle, and the same holds if $\mathcal{M}^{+}$ is replaced by  $\mathcal{M}^{-}$.
\end{ex}
%

\subsection{
Hamilton-Jacobi-Bellman Equations}
\label{sec:hjb}
We are given a family of linear degenerate elliptic operators
\begin{equation}
\label{linear}
L^{\al} u:= -\Tr(A^\al(x) D^2u) - b^\al(x)\cdot Du + c^\al(x) u \,
\end{equation}
where the parameter $\al$ takes values in a given set, $A^\al(x)\geq 0$ and  $c^\al(x)\geq 0$ for all $x$ and $\al$.
The H-J-B operators are 
\begin{equation}
\label{hjb-hom}
F_s(x,u,Du, D^2u) := \sup_\al L^{\al} u \, , \qquad F_i(x,u,Du, D^2u) := \inf_\al L^{\al} u \,   
\end{equation}
and we assume that $F_s(x, r,p, X), F_i(x, r,p, X)$  are finite and continuous for all entries  $(x, r,p, X)\in \overline{\Om}\times\R^d\times
\R^d
\times\Sym_d$. 
They are clearly proper and positively $1$-homogeneous. 
We can characterise the subunit vectors of these operators as follows.
\begin{lemma}
\label{lem:hjb}
Let $Z\in\R^d$ and $x\in\Om$.

\noindent i) $Z$ is SV for $F_i$ at $x$ if and only if $Z$ is subunit for all the matrices $A^\al(x)$,  i.e., $A^\al(x)\geq Z\otimes Z$ for all $\al$;

\noindent ii)   $Z$ is SV for $F_s$ at $x$ if   there exists $\bar\al$ such that $Z$ is subunit for the matrix $A^{\bar\al}(x)$. 
\end{lemma}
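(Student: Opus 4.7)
My plan is to read off both assertions from the explicit form of the linear operators at the probe arguments, namely
$$
L^{\al}(x,0,p,I-\gamma p\otimes p)=-\Tr A^{\al}(x)+\gamma\, p^{T}A^{\al}(x)p-b^{\al}(x)\cdot p,
$$
so that checking the SV condition $\sup_{\gamma>0}F(\cdot)>0$ becomes a question about the sign of the quadratic form $p^{T}A^{\al}p$ and its behaviour under $\gamma\to+\infty$.

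For part (ii), the argument is direct: assuming $A^{\bar\al}(x)\ge Z\otimes Z$, for every $p$ with $Z\cdot p\ne 0$ we have $p^{T}A^{\bar\al}p\ge(Z\cdot p)^{2}>0$, and hence $L^{\bar\al}(x,0,p,I-\gamma p\otimes p)\to+\infty$ as $\gamma\to+\infty$. Since $F_{s}\ge L^{\bar\al}$ pointwise, also $\sup_{\gamma}F_{s}>0$, giving that $Z$ is SV for $F_{s}$.

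For the ``$\Leftarrow$'' direction of (i), the uniform inequality $p^T A^{\al}p\ge(Z\cdot p)^{2}$ (holding for every $\al$) yields
$$
F_{i}(x,0,p,I-\gamma p\otimes p)\ge\gamma(Z\cdot p)^{2}-\sup_{\al}\bigl[\Tr A^{\al}(x)+b^{\al}(x)\cdot p\bigr],
$$
and the finiteness of $F_{i}$, applied at $(x,0,0,I)$ and $(x,0,p,0)$, makes the subtracted supremum finite. Letting $\gamma\to+\infty$ then produces the required positivity, proving $Z$ is SV for $F_{i}$.

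The ``$\Rightarrow$'' direction of (i) is where I would invest most care. The idea is to use the pointwise inequality $F_{i}\le L^{\bar\al}$: if $Z$ is SV for $F_{i}$ then automatically $Z$ is a generalized SV for every linear operator $L^{\bar\al}$ individually, and one can then invoke Lemma \ref{invariance}'s preceding Lemma 2.1 (the bridge between generalized SVs and classical Fefferman--Phong subunit vectors for linear operators) to conclude that each $A^{\bar\al}$ dominates $Z\otimes Z$. The main obstacle is that Lemma 2.1 as stated produces only $rZ$ classically subunit for $A^{\bar\al}$ for some small $r>0$; recovering the precise bound $A^{\bar\al}\ge Z\otimes Z$ therefore requires a finer inspection of the diagonalisation argument in the proof of Lemma 2.1, combined with the uniform bounds on $\Tr A^{\al}$ and $b^{\al}$ encoded in the assumed finiteness of $F_{i}$.
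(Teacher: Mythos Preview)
Your treatment of (ii) and of the ``$\Leftarrow$'' direction of (i) is essentially the paper's argument; in particular, using the finiteness of $F_i$ to bound $\sup_\alpha[\Tr A^\alpha(x)+b^\alpha(x)\cdot p]$ is exactly what the paper does (written there as $\inf_\alpha\{-\Tr A^\alpha-b^\alpha\cdot p\}>-\infty$).

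For the ``$\Rightarrow$'' direction of (i) your route differs from the paper's, and the gap you identify is real and cannot be closed. The paper argues directly by contrapositive: assuming $Z$ is not subunit for some $A^{\bar\alpha}(x)$, it claims one can pick $\bar p$ with $\bar p\cdot Z\neq 0$ and $\bar p^{T} A^{\bar\alpha}(x)\bar p=0$, and then, with the substitution $p=\eta\bar p$, $\eta=\sign(b^{\bar\alpha}\cdot\bar p)$, obtains $F_i(x,0,\eta\bar p,I-\gamma\eta^2\bar p\otimes\bar p)\leq -\Tr A^{\bar\alpha}-|b^{\bar\alpha}\cdot\bar p|\leq 0$ for all $\gamma$, so $Z$ is not SV. Your worry about $rZ$ versus $Z$ is well-founded, and the ``finer inspection'' you hope for cannot succeed: take a single $\alpha$, $d=2$, $A=\tfrac12\,e_1\otimes e_1+e_2\otimes e_2$, $b=0$; then $Z=e_1$ is SV for $F_i$ (since $p^{T}Ap>0$ whenever $p_1\neq 0$), yet $A\not\geq Z\otimes Z$. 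In fact the paper's own proof shares this defect: its step ``$Z$ not subunit $\Rightarrow$ there exists $\bar p$ with $\bar p\cdot Z\neq 0$ and $\bar p^{T}A^{\bar\alpha}\bar p=0$'' fails precisely for this example. What both approaches genuinely establish is the (weaker, but correct and sufficient for all subsequent applications) characterization: $Z$ is SV for $F_i$ at $x$ if and only if $p^{T}A^\alpha(x)p>0$ for every $\alpha$ and every $p$ with $Z\cdot p\neq 0$, i.e., if and only if $rZ$ is classically subunit for every $A^\alpha(x)$ for all small $r>0$.
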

\begin{proof} \textit{i)}  First suppose $A^{\al}(x)\geq Z\otimes Z$ for all $\al$. Then, for $p\cdot Z\ne 0$ and $\gamma$ large enough,
\begin{multline*}
F_i(x, 0,p, I-\gamma p\otimes p) = \inf_\al \{ -\Tr A^\al(x)  +\gamma p\cdot A^\al(x) p - b^\al(x)\cdot p \} \\
\geq \inf_\al \{ -\Tr A^\al(x)   - b^\al(x)\cdot p \}  +\gamma |Z\cdot p|^2 > 0\, .
\end{multline*}

Viceversa, suppose $Z$ is not a subunit vector of $A^{\bar\al}(x)$. Then there exist $\bar p$ such that $\bar p\cdot Z\ne 0$ and $\bar p\cdot A^{\bar\al}(x) \bar p=0$. Then, for any $\eta\in \R$  and $\gamma >0$
\[
F_i(x, 0,\eta\bar p, I- \gamma \eta^2\bar p\otimes \bar p) \leq -\Tr A^{\bar\al}(x)  -  \eta b^{\bar\al}(x)\cdot \bar p \leq  -  \eta b^{\bar\al}(x)\cdot \bar p \,.
\]
But the right hand side is $\leq 0$ 
by choosing  $\eta = \sign( b^{\bar\al}\cdot \bar p)$, and so 
$Z$ is not SV for $F_i$.

\smallskip
\textit{ii)}
Suppose $A^{\bar\al}(x)\geq Z\otimes Z$. Then, for $p\cdot Z\ne 0$ and $\gamma$ large enough
\begin{multline*}
F_s(x, 0,p, I-\gamma p\otimes p) = \sup_\al \{ -\Tr A^\al(x)  +\gamma p\cdot A^\al(x) p - b^\al(x)\cdot p \} \\
 \geq -\Tr A^{\bar\al}(x) + \gamma |Z\cdot p|^2- b^{\bar\al}(x) \cdot p > 0 \,.
\end{multline*}
 \end{proof}
The results of sections \ref{sec:max} and \ref{sec:min} combined with this Lemma give informations on the sets of propagation of maxima and minima of sub- and supersolutions. This was studied in detail in the papers of the first author and Da Lio \cite{BDL2, BDL3} using also tools from diffusion processes and differential games. Therefore we only point out explicitly a SMP for the concave H-J-B operator $F_i$ that we will exploit in Section \ref{sec:scp}. Its proof is an immediate consequence of Corollary \ref{SMaxP2} and Lemma \ref{lem:hjb}, and therefore it is more direct  than the one in \cite{BDL3}. We also give a Strong Minimum Principle for the convex operator $F_s$ following from Corollary \ref{cor:min}.

\begin{cor}
\label{smp-hjb}
Assume  $Z_{i} : \overline\Om\to \R^d$, $i=1,...,m$, are  locally Lipschitz 
vector fields such that 
$$
A^\al (x)\geq Z_{i}(x)\otimes Z_{i}(x) \quad\text{for all }  \al , \,i, \text{ and } x\,,
$$ 
and the system \eqref{S_0} satisfies the bounded time controllability property \eqref{BTC}.
Then 

\noindent i) any subsolution 
 of $\;\inf_\al L^{\al} u=0\;$ attaining a maximum in $\Om$ is constant,

\noindent ii) any supersolution 
 of $\;\sup_\al L^{\al} u=0\;$ attaining a minimum in $\Om$ is constant.
\end{cor}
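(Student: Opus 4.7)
The plan is to deduce both statements directly from Corollaries \ref{SMaxP2} and \ref{cor:min} via the characterisation in Lemma \ref{lem:hjb}, once the structural hypotheses (i)--(ii) (or their primed counterparts) are checked for the HJB operators $F_i$ and $F_s$.

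For part (i) I will verify the hypotheses of Corollary \ref{SMaxP2} applied to $F = F_i$. Since $F_i$ is assumed continuous on $\overline\Om\times\R^d\times\R^d\times\Sym_d$ it is, in particular, lower semicontinuous. Properness follows from $A^\al\geq 0$ and $c^\al\geq 0$: each $L^\al$ is nondecreasing in $r$ and nonincreasing in $X$, and the infimum over $\al$ preserves these monotonicities. The scaling property (ii) holds with $\phi(\xi)=\xi$ because each $L^\al$ is linear in $(r,p,X)$, so $F_i$ is positively $1$-homogeneous. Finally, the standing hypothesis $A^\al(x)\geq Z_i(x)\otimes Z_i(x)$ for every $\al$ and every $i$ is, by the equivalence in Lemma \ref{lem:hjb}(i), exactly the statement that each $Z_i$ is a subunit vector field for $F_i$ on $\Om$. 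Since the $Z_i$ are locally Lipschitz and \eqref{BTC} is assumed, Corollary \ref{SMaxP2} forces $u$ to be constant.

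For part (ii) I apply Corollary \ref{cor:min} to the supersolution $v$ of $F_s(x,v,Dv,D^2v)=0$. The operator $F_s$ is continuous (hence USC), proper, and positively $1$-homogeneous by the same reasoning as above, so conditions (i') and (ii') hold with $\phi(\xi)=\xi$. The crucial observation is that, by the linearity of each $L^\al$ in $(r,p,X)$, one has $L^\al(-u)=-L^\al u$, so that
\[
F_s^-(x,r,p,X)=-F_s(x,-r,-p,-X)=\inf_\al L^\al(x,r,p,X)=F_i(x,r,p,X).
\]
Hence $Z$ is SV for $F_s^-$ if and only if $Z$ is SV for $F_i$. By our hypothesis and Lemma \ref{lem:hjb}(i), this holds for each $Z_i$; equivalently, condition \eqref{subunit-} is satisfied for $F_s$ with $Z=Z_i(x)$. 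Combined with \eqref{BTC}, Corollary \ref{cor:min} then yields that $v$ is constant.

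The argument is essentially a bookkeeping exercise and I do not anticipate any serious obstacle. The only mildly delicate point is recognising the identity $F_s^-=F_i$, which is specific to the HJB structure: it is precisely this identity that forces us to invoke Lemma \ref{lem:hjb}(i) (whose \emph{for all }$\al$ condition matches our hypothesis) rather than Lemma \ref{lem:hjb}(ii) (which would only yield the SV property for $F_s$ itself and would be insufficient for the minimum principle).
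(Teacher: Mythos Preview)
Your proposal is correct and follows precisely the route indicated in the paper, which states that part (i) is an immediate consequence of Corollary~\ref{SMaxP2} and Lemma~\ref{lem:hjb}, and part (ii) follows from Corollary~\ref{cor:min}. Your explicit identification of $F_s^{-}=F_i$ makes the application of Corollary~\ref{cor:min} (and the relevance of Lemma~\ref{lem:hjb}(i) rather than (ii)) more transparent than the paper's one-line remark, but the underlying argument is the same.
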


%
%
\subsection{
Hamilton-Jacobi-Isaacs Equations}
\label{sec:hji}
Now we are given a two-parameter family of linear degenerate elliptic operators
\[ 
L^{\al, \beta} u:= -\Tr(A^{\al,\beta}(x) D^2u) - b^{\al,\beta}(x)\cdot Du + c^{\al,\beta}(x) u \,
\]
where the parameters $\al, \beta$ take values in two given sets, $A^{\al,\beta}(x)\geq 0$ and  $c^{\al,\beta}(x)\geq 0$ for all $x$, $\al, \beta$.
The Hamilton-Jacobi-Isaacs (briefly, H-J-I) operators are 
\[
F_-(x,u,Du, D^2u) := \sup_\beta \inf_\al L^{\al, \beta} u \, , \qquad F_+(x,u,Du, D^2u) :=  \inf_\al  \sup_\beta L^{\al, \beta}u \,  
  \]
and we assume that $F_-(x, r,p, X), F_+(x, r,p, X)$  are finite and continuous for all entries  $(x, r,p, X)\in \overline{\Om}\times\R^d\times
\R^d
\times\Sym_d$. 
They are clearly proper and positively $1$-homogeneous. 
We can find 
 subunit vectors of these operators following the arguments of Lemma \ref{lem:hjb}.
\begin{lemma}
\label{lem:hji}
Let $Z\in\R^d$ and $x\in\Om$.

\noindent i) $Z$ is SV for $F_-$ at $x$ if 
there exists $\bar \beta$ such that  
 $A^{\al,\bar\beta}(x)\geq Z\otimes Z$ for all  $\al$;

\noindent ii)   $Z$ is SV for $F_+$ at $x$ if  for all  $\al$ there exists $\beta(\al)$ such that 
$A^{\al,\beta(\al)}(x)\geq Z\otimes Z$.
\end{lemma}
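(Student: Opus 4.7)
The plan is to mimic the proof of Lemma \ref{lem:hjb} almost verbatim, exploiting that the inequality $A^{\al,\beta}(x) \geq Z\otimes Z$ translates into the algebraic bound $p\cdot A^{\al,\beta}(x)\, p \geq |Z\cdot p|^2$. The only new ingredient is handling the nested sup--inf or inf--sup so that, for every $\al$, the matrix chosen in the estimate dominates $Z\otimes Z$.

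For part i), I fix $p\in\R^d$ with $Z\cdot p\ne 0$ and discard the outer supremum by plugging in $\beta=\bar\beta$:
\begin{equation*}
F_-(x,0,p,I-\gamma p\otimes p) \;\geq\; \inf_\al\bigl\{-\Tr A^{\al,\bar\beta}(x) + \gamma\,p\cdot A^{\al,\bar\beta}(x)\,p - b^{\al,\bar\beta}(x)\cdot p\bigr\}.
\end{equation*}
Using the hypothesis $A^{\al,\bar\beta}(x)\geq Z\otimes Z$ uniformly in $\al$, this is in turn bounded below by
\begin{equation*}
\inf_\al\bigl\{-\Tr A^{\al,\bar\beta}(x) - b^{\al,\bar\beta}(x)\cdot p\bigr\} + \gamma\,|Z\cdot p|^2,
\end{equation*}
which becomes strictly positive as soon as $\gamma$ is large enough, provided the first term is finite.

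Part ii) is dual. I fix $p$ with $Z\cdot p\ne 0$ and, for each $\al$, feed the specific $\beta(\al)$ from the hypothesis into the inner supremum to obtain
\begin{equation*}
\sup_\beta L^{\al,\beta}(x,0,p,I-\gamma p\otimes p) \;\geq\; -\Tr A^{\al,\beta(\al)}(x) + \gamma\,|Z\cdot p|^2 - b^{\al,\beta(\al)}(x)\cdot p.
\end{equation*}
Taking $\inf_\al$ of both sides produces the analogue of the lower bound from part i), again positive once $\gamma$ is large.

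The one subtle point to verify is that the remainder $\inf_\al\{\cdot\}$ in each estimate is actually bounded from below. This is the only place where the hypothesis that $F_\pm$ be finite and continuous plays a role, and in practice it is guaranteed by the standard uniform boundedness of the coefficients $A^{\al,\beta}$ and $b^{\al,\beta}$ implicit in the H-J-I framework. Modulo this verification, the argument is essentially a one-line adaptation of Lemma \ref{lem:hjb}, so I do not expect any deeper obstacle.
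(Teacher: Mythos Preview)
Your proposal is correct and is precisely the argument the paper intends: the paper does not give an explicit proof of Lemma~\ref{lem:hji}, stating only that one can ``follow the arguments of Lemma~\ref{lem:hjb}'', and your computation carries this out verbatim. Your remark on the finiteness of the residual $\inf_\al\{\cdot\}$ is also well placed; the paper makes the same implicit use of the standing assumption that $F_\pm$ are finite and continuous (equivalently, uniform boundedness of the coefficients).
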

Then we get 
 the following SMP for the H-J-I equations.
\begin{cor}
Assume  $Z_{i} : \overline\Om\to \R^d$, $i=1,...,m$, are  locally Lipschitz 
vector fields such that  the system \eqref{S_0} satisfies the bounded time controllability property \eqref{BTC}. Then

\noindent i) if there exists $\bar \beta$ such that
$$
A^{\al,\bar\beta}(x)\geq Z_{i}(x)\otimes Z_{i}(x) \quad\text{for all }  \al , \,i, \text{ and } x\,,
$$ 
then 
any 
subsolution 
 of $\;\sup_\beta \inf_\al L^{\al, \beta}u=0\;$ attaining a maximum in $\Om$ is constant;
 
 \noindent ii)  
 if  for all  $\al$ there exists $\beta(\al)$ such that 
$$
A^{\al,\beta(\al)}(x)\geq Z_{i}(x)\otimes Z_{i}(x)  \quad\text{for all } 
i \text{ and } x\,,
$$
then any 
subsolution 
 of $\; \inf_\al \sup_\beta L^{\al, \beta}u=0\;$ attaining a maximum in $\Om$ is constant.
\end{cor}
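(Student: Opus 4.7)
The plan is to reduce both parts to the abstract Strong Maximum Principle in Corollary \ref{SMaxP2}. To apply that corollary I need to check three things about the operator at hand: that the basic hypotheses (i)--(ii) from the introduction hold, that the given $Z_i$ are indeed locally Lipschitz subunit vector fields for the operator, and that the system \eqref{S_0} associated with them satisfies \eqref{BTC}. The third item is in the hypotheses, so only the first two require an argument, and both follow rather mechanically from the structure of H-J-I operators.

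For hypothesis (i), both $F_-$ and $F_+$ are assumed continuous (hence lower semicontinuous), and properness is immediate: if $r\le s$ and $Y\le X$, then each $L^{\al,\beta}$ is proper because $A^{\al,\beta}\ge 0$ and $c^{\al,\beta}\ge 0$, and properness is preserved under $\sup_\beta\inf_\al$ and $\inf_\al\sup_\beta$. For hypothesis (ii), I would observe that each $L^{\al,\beta}$ is positively $1$-homogeneous in $(u,Du,D^2u)$, a property preserved under sup/inf combinations, so (ii) holds with $\phi(\xi)=\xi$ on the relevant domain.

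For the subunit vector field condition, I would invoke directly Lemma \ref{lem:hji}. In part i) the assumption provides a single $\bar\beta$ such that $A^{\al,\bar\beta}(x)\ge Z_i(x)\otimes Z_i(x)$ for all $\al$, all $i$ and all $x$; this is exactly the sufficient condition in item i) of Lemma \ref{lem:hji} applied pointwise, so each $Z_i$ is a subunit vector field of $F_-$. In part ii), the selector $\al\mapsto \beta(\al)$ provides, for each $\al$, a $\beta(\al)$ with $A^{\al,\beta(\al)}(x)\ge Z_i(x)\otimes Z_i(x)$ for all $i$ and $x$, which is the sufficient condition in item ii) of Lemma \ref{lem:hji}, hence each $Z_i$ is a subunit vector field of $F_+$.

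With (i), (ii) and the SVF property established, and \eqref{BTC} assumed, Corollary \ref{SMaxP2} applies to each of the two operators and yields that any viscosity subsolution attaining a nonnegative maximum in $\Om$ is constant. There is no real obstacle here since everything reduces to checking the hypotheses; the only mild subtlety is making sure the positive $1$-homogeneity actually delivers the scaling inequality (ii) in the form required (with an equality, in fact, so both directions are trivial), and that Lemma \ref{lem:hji} is applicable pointwise with the \emph{same} $\bar\beta$ (respectively, the same selector $\beta(\cdot)$) working uniformly in $x$, which is precisely how the hypotheses are phrased.
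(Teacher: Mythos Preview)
Your proposal is correct and follows essentially the same route the paper intends: the paper does not spell out a proof but states the corollary immediately after Lemma \ref{lem:hji} and the remark that $F_\pm$ are proper and positively $1$-homogeneous, so the argument is precisely to combine Lemma \ref{lem:hji} with Corollary \ref{SMaxP2}, as you do. One small point worth flagging: Corollary \ref{SMaxP2} (and Theorem \ref{teo:main}) require the interior maximum to be \emph{nonnegative}, whereas the corollary as stated drops that qualifier; your write-up correctly keeps it, so the discrepancy is in the paper's phrasing rather than in your reasoning.
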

Sufficient conditions for the Strong Minimum Principle can be easily found in the same way, as follows.
\begin{cor}
Assume  $Z_{i} : \overline\Om\to \R^d$, $i=1,...,m$, are  locally Lipschitz 
vector fields such that  the system \eqref{S_0} satisfies the bounded time controllability property \eqref{BTC}. Then

\noindent i)
 if  for all  $\beta$ there exists $\al(\beta)$ such that 
$$
A^{\al(\beta),\beta}(x)\geq Z_{i}(x)\otimes Z_{i}(x)  \quad\text{for all } 
i \text{ and } x\,,
$$ 
then 
any 
supersolution 
 of $\;\sup_\beta \inf_\al L^{\al, \beta}u=0\;$ attaining a minimum in $\Om$ is constant;
 
 \noindent ii)  
 if there exists $\bar \al$ such that
$$
A^{\bar\al,\beta}(x)\geq Z_{i}(x)\otimes Z_{i}(x) \quad\text{for all }  \beta , \,i, \text{ and } x\,,
$$
then any 
supersolution 
 of $\; \inf_\al \sup_\beta L^{\al, \beta}u=0\;$ attaining a minimum in $\Om$ is constant.
\end{cor}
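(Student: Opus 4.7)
The plan is to reduce the statement to the preceding Strong Maximum Principle corollary for H--J--I equations by the duality between super- and subsolutions explained in Section \ref{sec:min}. Concretely, if $v$ is a viscosity supersolution attaining a nonnegative minimum at $x_0\in\Om$, then $u:=-v$ belongs to $\USC(\Om)$, attains a nonnegative maximum at $x_0$, and is a viscosity subsolution of $F^-(x,u,Du,D^2u)=0$ where $F^-(x,r,p,X):=-F(x,-r,-p,-X)$.

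For part (i), with $F=F_-=\sup_\beta\inf_\al L^{\al,\beta}$, a direct computation gives
\[
F_-^-(x,r,p,X)=\inf_\beta\sup_\al\bigl[-\Tr(A^{\al,\beta}(x)X)-b^{\al,\beta}(x)\cdot p+c^{\al,\beta}(x)r\bigr],
\]
so $u$ is a subsolution of $\inf_\beta\sup_\al L^{\al,\beta}u=0$, which is an $F_+$-type equation with the roles of the two parameters swapped. Applying Lemma \ref{lem:hji}(ii) with $\al$ and $\beta$ interchanged, the condition in (i)---namely, for every $\beta$ there is $\al(\beta)$ with $A^{\al(\beta),\beta}(x)\ge Z_i(x)\otimes Z_i(x)$---is exactly what makes each $Z_i(x)$ a subunit vector for $F_-^-$ at $x$. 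Since \eqref{BTC} holds, the preceding corollary (or directly Corollary \ref{SMaxP2}) gives that $u$ is constant, hence so is $v$. For part (ii), with $F=F_+=\inf_\al\sup_\beta L^{\al,\beta}$, the same calculation yields $F_+^-=\sup_\al\inf_\beta L^{\al,\beta}$, an $F_-$-type operator with parameters swapped, and the hypothesis of (ii) (existence of a single $\bar\al$ dominating the SVs uniformly in $\beta$) is precisely the swapped form of Lemma \ref{lem:hji}(i), making each $Z_i(x)$ a subunit vector for $F_+^-$. Again Corollary \ref{SMaxP2} concludes.

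There is no real obstacle here: once the operators $F_-^-$ and $F_+^-$ are identified as min-max/max-min expressions with swapped roles, the subunit vector conditions read off directly from the already-proved Lemma \ref{lem:hji}, and both (i) and (ii) follow immediately. The only bookkeeping step that deserves care is verifying that the linearity of each $L^{\al,\beta}$ makes the duality $v\mapsto -v$ interchange $\sup_\beta\inf_\al$ with $\inf_\beta\sup_\al$ (and vice versa), which is routine since $L^{\al,\beta}(-u)=-L^{\al,\beta}u$.
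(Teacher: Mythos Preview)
Your approach is exactly what the paper intends: pass to $u=-v$ and $F\mapsto F^-$ as in Section~\ref{sec:min}, identify $F_-^-=\inf_\beta\sup_\al L^{\al,\beta}$ and $F_+^-=\sup_\al\inf_\beta L^{\al,\beta}$ as Isaacs operators with the roles of $\al$ and $\beta$ swapped, and read off the subunit-vector conditions directly from Lemma~\ref{lem:hji}. One small slip to fix: if $v$ attains a \emph{nonnegative} minimum then $u=-v$ attains a \emph{nonpositive} maximum, not a nonnegative one; since the statement imposes no sign condition and the operators here are positively $1$-homogeneous this does not affect the argument, but the sentence as written is internally inconsistent.
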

\begin{ex}
If $\mathcal{X}=(X_1,...,X_m)$ are $C^{1,1}$ vector fields on $\overline{\Omega}$ satisfying (BTC), $a, b\in C(\overline\Omega)$ are nonnegative, and $\mathcal{M}^{+}, \mathcal{M}^{-}$ denote the Pucci's extremal operators, then the equation
$$
a(x) \mathcal{M}^{+}((D^2_{\mathcal{X}}u)^*) + b(x)\mathcal{M}^{-}((D^2_{\mathcal{X}}u)^*)=0
$$
is of H-J-I form and satisfies both the SMP and the Strong Minimum Principle.
\end{ex}
\subsection{Other examples and remarks} 
\label{sec:lower}
All the examples of the previous sections satisfy the following property, stronger than Definition \ref{subunit},
\begin{equation}
\label{strong-s}
\lim_{\gamma \to +\infty}F(x,0,p,I-\gamma p\otimes p) = +\infty \quad \forall p\in \R^{d} \;\text{ such that } \; Z\cdot p\ne 0 .
\end{equation}
If $F$ has a SV $Z$ at $x$ satisfying \eqref{strong-s}, then  clearly $Z$ is a SV at $x$ also for any perturbation of $F$ with first or 
zero-th order terms
\[
\bar F(x,r,p,X) = F(x,r,p,X) + H(x,r,p) .
\]
As a consequence, if $F$ satisfies a SMP and $H$ is lower semicontinuous, non-decreasing in $r$, and satisfies (ii) with the same $\phi$ as $F$, then $\bar F$ satisfies the same SMP as $F$.

\begin{ex}
Consider the following perturbation of a Pucci's subelliptic equation associated to H\"ormander vector fields $\mathcal{X}$
\begin{equation*}
c(x)|u|^{k-1}u-a(x)\mathcal{M}^{+}((D^2_{\mathcal{X}}u)^*) + H(x,Du)=0 \,,
\end{equation*}
where $c, a, H$ are continuous and satisfy
\[
c\geq0\ ,\, a>0\ ,
\; 
\text{ either }c=0\text{ or } 1 \leq k\,, 
\; H(x,\xi p)=\xi H(x,p) \;\forall \, \xi>0 \, .
\]
Then the SMP and the Strong Minimum Principle hold, and the same is true if $\mathcal{M}^{+}$ is replaced by  $\mathcal{M}^{-}$.
\end{ex}

Next we give an example of operator 
 that satisfies SMP but whose SV do not satisfy  the stronger property \eqref{strong-s}.
 \begin{ex}
 Consider the equation
\begin{equation}
\label{counter}
 \frac{- \Delta u}{1+|\Delta u|} + f(x) = 0 .
 \end{equation}
 It is easy to see that $F(x,X)= - \Tr X/(1+|\Tr X|)+f(x)$ satisfies condition (i), and also the scaling condition (ii) if $f(x)\geq 0$, by taking $\phi(\xi)=1$ if $\Tr X\geq 0$ and $\phi(\xi)=\xi$ if $\Tr X < 0$. Moreover 
 \begin{equation*}
\lim_{\gamma \to +\infty}F(x,0,p,I-\gamma p\otimes p) = 1+f(x) \quad \forall p\in \R^{d} \,,
\end{equation*}
so any vector $Z\in\R^d$ is SV for $F$ at $x$ if $f(x)>-1$. Then for $f\geq 0$ the equation satisfies the SMP by Remark \ref{gen scal}. However the  stronger property \eqref{strong-s} is not verified for any $Z\in\R^d$.
 \end{ex}
  \begin{ex}{(A counterexample from \cite{KK})} Consider equation \eqref{counter} with $f(x)=0$ for all $x\ne 0$ and $f(0)=-1$. Then (i) holds everywhere, whereas (ii) and the existence of SVs fail only at $x=0$. The SMP is violated by the subsolution $u(x)=0$ for all $x\ne 0$ and $u(0)=1$.
 
     \end{ex}
\section{Strong Comparison Principles}
\label{sec:scp}
In this section we consider non-homogeneous equations that can be written in H-J-Bellman form, namely
 \begin{equation}
 \label{hjb_inf}
\inf_\al \{ L^{\al} u - f^\al (x) \} = 0 \quad \text{ in } \Omega
\end{equation}
 \begin{equation}
\label{hjb_sup}
\sup_\al \{ L^{\al} u - f^\al (x) \} = 0 \quad \text{ in } \Omega
\end{equation}
where $L^\al$ are the linear operators defined in \eqref{linear}. We recall that $F_i$ and $F_s$ defined in \eqref{hjb-hom} are the 1-homogeneous operators obtained by setting $f^\al =0$ in the operator of the equation \eqref{hjb_inf} and  \eqref{hjb_sup}, respectively.
We say that a PDE satisfies the {\em Comparison Principle in a ball} $B(x,r)$ if for any subsolution $u$ and supersolution $v$ in $B(x,r)$ such that $u\leq v$ on $\partial B(x,r)$ we have $u\leq v$ on $\overline B(x,r)$. We will denote
\[
F(x,r,p,X):= \inf_\al \{ -\Tr(A^\al(x) X) - b^\al(x)\cdot p + c^\al(x) r - f^\al(x) \}
\]
\begin{lemma}
Let  $u\in \USC(
\Om)$,  $v\in \LSC(
\Om)$ be, respectively, a sub- and a supersolution of  \eqref{hjb_inf}. 
Assume that for some $\bar r$ the equation  \eqref{hjb_inf} satisfies the Comparison Principle in $B(x,r)$ for all $0<r<\bar r$, and that $F_i$ is continuous and verifies the SMP. If  $u-v$ attains a nonnegative maximum in $\Om$, then $u \equiv v + $constant.
\end{lemma}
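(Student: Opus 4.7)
The plan is to show that $w:=u-v$ is a viscosity subsolution of the homogeneous Hamilton--Jacobi--Bellman equation $F_i(x,w,Dw,D^2w)=0$ in $\Om$, where $F_i=\inf_\al L^\al$ is the 1-homogeneous operator from \eqref{hjb-hom}. Since $w\in\USC(\Om)$ attains a nonnegative maximum in $\Om$ and, by hypothesis, $F_i$ is continuous and satisfies the SMP, Corollary \ref{smp-hjb} then forces $w$ to be constant, which is exactly $u\equiv v+\text{const}$. So the whole problem reduces to the subsolution claim for $w$.

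The formal reason $w$ is a subsolution of $F_i=0$ is the concavity of $F(x,\cdot):=\inf_\al\{L^\al\cdot\,-f^\al(x)\}$ in $(r,p,X)$. Picking $\al^*$ that (nearly) attains $F(x,\xi_1)$ and using that $g_\al:=L^\al-f^\al$ is affine in $\xi=(r,p,X)$, one has $F(x,\xi_2)\le g_{\al^*}(x,\xi_2)$, hence
\[
F(x,\xi_1)-F(x,\xi_2)\ge L^{\al^*}(x)(\xi_1-\xi_2)\ge F_i(x,\xi_1-\xi_2).
\]
At a smooth point this would immediately give $F_iw\le Fu-Fv\le 0$ via the sub/super\-solution inequalities for $u$ and $v$.

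To implement this in viscosity form I would use doubling of variables. Let $\phi\in C^2(\Om)$ and $x_0\in\Om$ be such that $w-\phi$ attains a strict local maximum at $x_0$. For small $\eps>0$ set
\[
\Psi_\eps(x,y):=u(x)-v(y)-\phi(x)-\frac{|x-y|^2}{2\eps}
\]
and let $(x_\eps,y_\eps)$ be a maximizer. Standard penalization estimates give $(x_\eps,y_\eps)\to(x_0,x_0)$, $u(x_\eps)\to u(x_0)$, $v(y_\eps)\to v(x_0)$, and $|x_\eps-y_\eps|^2/\eps\to 0$ as $\eps\to 0^+$. The Crandall--Ishii lemma produces matrices $X_\eps,Y_\eps\in\Sym_d$ with $X_\eps-Y_\eps\le D^2\phi(x_0)+o(1)$ and semijets $(p^u_\eps,X_\eps)\in\overline J^{2,+}u(x_\eps)$, $(p^v_\eps,Y_\eps)\in\overline J^{2,-}v(y_\eps)$ with $p^u_\eps-p^v_\eps=D\phi(x_\eps)$. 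Writing the sub/supersolution inequalities at these semijets, using the uniform-in-$\al$ Lipschitz regularity of $A^\al,b^\al,c^\al,f^\al$ in $x$ to replace $y_\eps$ by $x_\eps$ up to $o(1)$, and invoking the concavity bound above yields
\[
F_i\bigl(x_\eps,\,u(x_\eps)-v(y_\eps),\,D\phi(x_\eps),\,X_\eps-Y_\eps\bigr)\le o(1).
\]
Properness (monotonicity of $F_i$ in $X$) and continuity of $F_i$ then allow passage to the limit $\eps\to 0$, giving $F_i(x_0,w(x_0),D\phi(x_0),D^2\phi(x_0))\le 0$.

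The main obstacle is this final concavity-plus-Crandall--Ishii bookkeeping: semijets at two distinct points must be combined into one viscosity inequality at $x_0$ for the difference $w$. Carrying out this step rigorously needs uniform-in-$\al$ Lipschitz regularity of all HJB data in $x$ (already in force under the paper's standing assumptions on $b^\al,f^\al$) to absorb the mismatch between the $x_\eps$- and $y_\eps$-evaluations. The local Comparison Principle hypothesis, while not used explicitly in the linearization above, is the consistency requirement ensuring that this reduction to $F_i$ interacts correctly with the original equation \eqref{hjb_inf}; equivalently it could be exploited via Perron or inf/sup-convolution arguments to regularize $u$ and $v$ and verify the concavity inequality pointwise by Alexandroff's theorem before passing to the limit.
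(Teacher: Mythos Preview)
Your overall plan---show that $w=u-v$ is a viscosity subsolution of $F_i(x,w,Dw,D^2w)=0$ and then invoke the SMP hypothesis---matches the paper's, and your formal concavity inequality $F(x,\xi_1)-F(x,\xi_2)\ge F_i(x,\xi_1-\xi_2)$ is exactly the algebraic heart of the matter. But the viscosity implementation you propose does not prove the Lemma as stated, and it ignores the very hypothesis the paper exploits.

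The Lemma assumes only that \eqref{hjb_inf} satisfies a local Comparison Principle and that $F_i$ is continuous; no regularity of $A^\al,b^\al,c^\al,f^\al$ in $x$ is assumed here (those assumptions \eqref{reg-coeff0}--\eqref{reg-coeff2} enter only later, in Theorem~\ref{SCP}, precisely to \emph{verify} the local Comparison hypothesis). Your doubling-of-variables argument explicitly requires uniform-in-$\al$ Lipschitz data to absorb the $x_\eps$/$y_\eps$ mismatch, so it proves a different statement with stronger hypotheses. Your final paragraph admits that the local Comparison Principle is ``not used explicitly'' and offers only a vague claim that it is a ``consistency requirement'' or could be used via Perron/inf-sup convolution; that is not an argument.

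By contrast, the paper uses the local Comparison Principle \emph{directly} to establish the subsolution property of $w$, with no regularity of the coefficients at all. Suppose $w-\varphi$ has a strict maximum at $\bar x$ with $(w-\varphi)(\bar x)=0$, and assume for contradiction $F_i[\varphi](\bar x)>0$. Continuity of $F_i$ gives $\delta,r>0$ with $F_i[\varphi-\delta]>0$ on $B(\bar x,r)$; strictness of the maximum gives $0<\eta<\delta$ with $w-\varphi\le -\eta$ on $\partial B(\bar x,r)$. The key step is then to show that $v+\varphi-\eta$ is a viscosity \emph{supersolution} of \eqref{hjb_inf} in $B(\bar x,r)$: if $\psi$ tests $v+\varphi-\eta$ from below at $\tilde x$, then $\psi-\varphi+\eta$ tests $v$, so $F[\psi-\varphi+\eta](\tilde x)\ge 0$; the concavity inequality (your own $F(\xi_1)-F(\xi_2)\ge F_i(\xi_1-\xi_2)$ with $\xi_1=\psi$, $\xi_2=\varphi-\eta$) yields $F[\psi](\tilde x)\ge F_i[\varphi-\eta](\tilde x)>0$. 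Now $u\le v+\varphi-\eta$ on $\partial B(\bar x,r)$, so the local Comparison Principle gives $u\le v+\varphi-\eta$ inside, contradicting $u(\bar x)=v(\bar x)+\varphi(\bar x)$. Thus the Comparison hypothesis replaces the entire Crandall--Ishii machinery and all $x$-regularity of the data.
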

\begin{proof}
We claim that $w=u-v$ is a subsolution of $F_i(x,w,Dw,D^2w)=0$. This is easily seen if $u, v$ are smooth because
\[
\inf_\al \{ L^{\al} (u - v)\} \leq \inf_\al \{ L^{\al} u - f^\al (x) - \inf_{\al'} [L^{\al'} u - f^{\al'} (x)] \} \leq 0 \,.
\]
However,  handling the viscosity subsolution property requires more care and the use of the local Comparison Principle. Once the claim is proved the conclusion of the lemma is immediately achieved by the SMP for $F_i$.

We use the compact notations $F[z], F_i[z]$ to denote, respectively, $F(x,z,Dz,D^2z)$ and $F_i(x,z,Dz,D^2z).$ Let $\bar{x}\in\Omega$ and $\varphi$ be a smooth function such that $(w-\varphi)(\bar{x})=0$ and $w-\varphi$ has a strict maximum at $\bar{x}$. Let us argue by contradiction, assuming that $F_i[\varphi(\bar{x})]>0$. We first 
observe that, by the continuity of $F_i$, there exists $\delta>0$ such that
\[
F_i(\bar{x},\varphi(\bar{x})-\delta,D\varphi(\bar{x}),D^2\varphi(\bar{x}))>0\ .
\]
Therefore, 
using the continuity of $F_i$ and the smoothness of $\varphi$, 
we get the existence of $r$ 
 such that
\[
F_i[\varphi-\delta]>0\text{ in }B(\bar{x},r)\ .
\]
Since $w-\varphi$ attains a strict maximum at $\bar{x}$, there exists $0<\eta<\delta$ such that $w-\varphi\leq-\eta<0$ on $\partial B(\bar{x},r)$. We now claim that $v+\varphi-\eta$ satisfies $F[v+\varphi-\eta]\geq0$ in $B(\bar{x},r)$. To this aim, take $\tilde{x}\in B(\bar{x},r)$ and $\psi$ smooth such that $v+\varphi-\eta-\psi$ has a minimum at $\tilde{x}$. Using that $v$ is a 
supersolution of \eqref{hjb_inf},  
denoting by $\tilde{L}^{\alpha}u:=-\Tr(A^{\al}(x) D^2u) - b^{\al}(x)\cdot Du$, we obtain
\begin{multline*}
0\leq F[\psi(\tilde{x})-\varphi(\tilde{x})+\eta]=\inf_{\alpha}\{\tilde{L}^{\alpha}\psi(\tilde{x})-\tilde{L}^{\alpha}\varphi(\tilde{x})+c^\al(\tilde{x})
(\psi(\tilde{x})-\varphi(\tilde{x})+\eta) -f^\al(\tilde{x})\}\\
\leq \inf_{\alpha}\{\tilde{L}^{\alpha}\psi(\tilde{x})+c^\al(\tilde{x})
\psi(\tilde{x})-f^\al(\tilde{x})\}-\inf_{\alpha}\{\tilde{L}^{\alpha}\varphi(\tilde{x})+c^\al(\tilde{x})
(\varphi(\tilde{x})-\eta)\}\\
=F[\psi(\tilde{x})]-F_i[\varphi(\tilde{x})-\eta
]<F[\psi(\tilde{x})]\ .
\end{multline*}
This 
proves the claim that $v+\varphi-\eta$ is a supersolution of \eqref{hjb_inf} in $B(\bar{x},r)$. Now, since $u\leq v+\varphi-\eta$ on $\partial B(\bar{x},r)$, the (local) Comparison Principle yields $u\leq v+\varphi-\eta$ in $B(\bar{x},r)$, in contradiction with the fact that $u(\bar{x})=v(\bar{x})+\varphi(\bar{x})$.
\end{proof}

Now we  can prove the second main result of the paper. We will make the following standard assumptions on the coefficients of $F$:
 \begin{equation}
\label{reg-coeff0}
A^\al(x)=\sigma^\al(x) (\sigma^\al(x))^T \,,\quad \sigma^\al : \overline\Omega \to \{ d\times m \text{ matrices } \}
\end{equation}
 \begin{equation}
\label{reg-coeff1}
\sigma^\al \text{ and } b^\al :  \overline\Omega \to \R^d \;\text{ locally Lipschitz in $x$ 
 uniformly in } \al \,;
\end{equation}
 \begin{equation}
\label{reg-coeff2}
c^\al \geq 0 \,, \quad c^\al \text{ and } f^\al \text{ continuous in $x\in\overline\Omega$ uniformly in } \al \,.
\end{equation}
\begin{thm}
\label{SCP}
Assume \eqref{reg-coeff0}, \eqref{reg-coeff1}, \eqref{reg-coeff2}, and the existence of 
vector fields $Z_{i} : \overline\Om\to \R^d$, $i=1,...,m$, satisfying the H\"ormander condition {\upshape (H)} and such that 
$$
A^\al (x)\geq Z_{i}(x)\otimes Z_{i}(x) \quad\text{for all }  \al , \,i, \text{ and } x\, .
$$ 
 If $u\in \USC(
\Om)$,  $v\in \LSC(
\Om)$ are, respectively, a sub- and a supersolution of  \eqref{hjb_inf} and  $u-v$ attains a nonnegative maximum in $\Om$, then $u \equiv v + $constant.
\end{thm}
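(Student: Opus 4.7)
The plan is to invoke the preceding Lemma, which reduces the Strong Comparison Principle to three ingredients: continuity of $F_i$, validity of the SMP for the homogeneous operator $F_i$, and validity of the (weak) Comparison Principle for \eqref{hjb_inf} on all sufficiently small balls $B(x,r) \subset \Om$. Once these are verified, the conclusion $u \equiv v + \mathrm{constant}$ follows at once.

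Continuity of $F_i$ is a direct consequence of \eqref{reg-coeff0}-\eqref{reg-coeff2}: since $\sigma^\al, b^\al, c^\al$ admit moduli of continuity uniform in $\al$, the maps $(x,r,p,X) \mapsto -\Tr(A^\al(x) X) - b^\al(x) \cdot p + c^\al(x) r$ are equicontinuous in $\al$, and taking the infimum over $\al$ preserves a uniform modulus on $\overline \Om \times \R \times \R^d \times \Sym_d$. The SMP for $F_i$ follows from Corollary \ref{smp-hjb}(i): the assumption $A^\al(x) \geq Z_i(x) \otimes Z_i(x)$ provides the required subunit structure, while the H\"ormander condition on $Z_1, \dots, Z_m$ yields the bounded time controllability property \eqref{BTC} via the Chow-Rashevskii theorem, exactly as in the proof of Corollary \ref{SMaxP}.

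The substantive work is the local Comparison Principle, which I would derive by a standard Crandall-Ishii-Lions doubling-of-variables argument tailored to HJB equations, see, e.g., \cite{CIL}. Given $x_0 \in \Om$ and a small ball $B := B(x_0, r) \subset \Om$, suppose by contradiction that $u \leq v$ on $\partial B$ but $\max_{\overline B}(u-v) > 0$. Penalize via $\Phi_\epsilon(x,y) := u(x) - v(y) - |x-y|^2/\epsilon^2$, apply the Theorem of Sums at an interior maximizer $(x_\epsilon, y_\epsilon)$, and subtract the viscosity inequalities for $u$ and $v$. The factorization $A^\al = \sigma^\al(\sigma^\al)^T$ together with the Lipschitz bound \eqref{reg-coeff1} yields the classical estimate $\Tr(A^\al(x_\epsilon) X_\epsilon) - \Tr(A^\al(y_\epsilon) Y_\epsilon) \leq C |x_\epsilon - y_\epsilon|^2/\epsilon^2$ uniformly in $\al$; the first-order contribution $(b^\al(x_\epsilon) - b^\al(y_\epsilon)) \cdot p_\epsilon$, with $p_\epsilon = 2(x_\epsilon - y_\epsilon)/\epsilon^2$, is bounded analogously; the zero-th order terms are controlled by $c^\al \geq 0$ combined with the uniform continuity of $c^\al, f^\al$. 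Letting $\epsilon \to 0^+$ produces the desired contradiction.

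The main obstacle is ensuring that every estimate in the doubling-of-variables scheme is uniform in $\al$, so that after taking the infimum the sign we need is preserved. This is precisely the purpose of the $\al$-uniformity built into \eqref{reg-coeff1}-\eqref{reg-coeff2}: with it, all the standard bounds pass to the infimum and the scheme closes. Once the local Comparison Principle is in hand, the preceding Lemma delivers $u \equiv v + \mathrm{constant}$ on $\Om$.
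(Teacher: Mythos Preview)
Your overall plan via the preceding Lemma is correct, and your verification of the continuity of $F_i$ and of the SMP for $F_i$ is fine. The gap is in the local Comparison Principle.

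The standard Crandall--Ishii--Lions doubling-of-variables scheme does \emph{not} close here without additional input, because the equation is only proper, not strictly proper: the zero-th order coefficients satisfy merely $c^\al \geq 0$ and may vanish identically. In that case, after the doubling argument yields the usual estimates on the second- and first-order terms, the remaining inequality reduces to something of the form $c^{\al}(\hat x)\big(u(\hat x)-v(\hat x)\big) \leq o(1)$, which gives no contradiction when $c^\al \equiv 0$. The $\al$-uniformity you emphasize is necessary but not the real obstacle; the paper in fact flags exactly this issue in the Introduction when it distinguishes comparison for strictly proper $F$ from the merely proper case.

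The paper's proof exploits the hypothesis $A^\al(x)\geq Z_i(x)\otimes Z_i(x)$ a second time, beyond the SMP: since the H\"ormander condition forbids all $Z_i$ from vanishing at the same point, one obtains a uniform partial ellipticity
\[
F(x,r,p,X+sI)\leq F(x,r,p,X)-\eta(x)\,s,\qquad \eta(x):=\tfrac1m\sum_{i=1}^m|Z_i(x)|^2>0 .
\]
This, together with the local Lipschitz dependence of $F$ on $p$, allows one to perturb any subsolution $u$ into a \emph{strict} subsolution $u_\epsilon(x)=u(x)+\epsilon\bigl(e^{|x-\bar x|^2/2}-\lambda\bigr)$ on a sufficiently small ball $B(\bar x,\bar r)$, with $\bar r$ chosen so that the ellipticity gain $\eta$ beats the Lipschitz loss in $p$. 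Comparison between a strict subsolution and a supersolution is then standard viscosity theory, and letting $\epsilon\to 0$ gives the local Comparison Principle you need. Without this step---or some equivalent device replacing the missing strict monotonicity in $r$---your argument is incomplete.
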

\begin{proof}
Under the current assumptions $F$ is finite and continuous in $\overline{\Om}\times\R^d\times\R^d\times\Sym_d$ and it is proper. The homogeneous operator $F_i$ satisfies the SMP by Corollary \ref{smp-hjb}. 

Note that F satisfies the Lipschitz property in $p$ in any compact subset $K\subset\Omega$:
 \begin{equation}
\label{Lip-p}
|F(x,r,p,X)-F(x,r,q,X)| \leq L_K|p-q| \,, \quad \forall\, x\in K .
\end{equation}
Moreover there is $\eta\in C(\overline\Omega)$, $\eta>0$, such that
 \begin{equation}
\label{ell.ty}
F(x,r,p,X+ sI) \leq F(x,r,p,X) - \eta(x) s , \quad \forall\, s>0 .
\end{equation}
In fact, $\Tr (A^\al(x) I)\geq \Tr (Z_{i}(x)\otimes Z_{i}(x)) = |Z_i(x)|^2$ for all $i$, and so
\[
\eta(x) := \frac 1m \sum_{i=1}^m |Z_i(x)|^2
\]
does the job, because the H\"ormander condition prevents that all $Z_i$ vanish at the same point.

By standard viscosity theory \cite{CIL} the equation  \eqref{hjb_inf} verifies the Comparison Principle between a supersolution $v$ and a strict subsolution, say $u_\epsilon$, in a ball $B(x,\bar{r})\subseteq\Omega$ for some $\bar{r}>0$. 
More precisely, $u_\epsilon$ is an upper semicontinuous function in $\overline{B}(x,\bar{r})$ such that
\[
F(x,u_\epsilon,Du_\epsilon,D^2u_{\epsilon})\leq \alpha(x)\text{ in }B(x,\bar{r})
\]
with $\alpha\in C(\overline{B}(x,\bar{r}))$ and $\alpha<0$. If, in addition, $u_\epsilon\rightarrow u$ for all $x$ as $\epsilon$ approaches to 0, then one immediately concludes $u\leq v$ in $B(x,\bar{r})$. 
Next we show that the Comparison Principle holds in all sufficiently small balls, following an argument in \cite{BM}. 
 To this aim, fix $\bar{x}\in \Omega$, $r_1>0$ such that $\overline B(\bar{x}, r_1
 )\subseteq\Omega$,  and let $\bar{\eta}:=\min_{\overline B(\bar{x},r_1
 )}\eta>0$. 
We choose $0<\delta<\bar{\eta}$ and 
$$
\bar{r}:= \min\left(\frac{\bar{\eta}-\delta}{L_K}, r_1\right) ,\quad K:= \overline B(\bar{x}, r_1) .
$$
Consider the function
\[
u_\epsilon(x)=u(x)+\epsilon(e^{\frac{|x-\bar{x}|^2}{2}}-\lambda),\ x\in B(\bar{x},\bar{r})\ .
\]
We claim that $u_\epsilon$ is a strict subsolution in $B(\bar{x},\bar{r})$ for $\lambda$ sufficiently large 
 independent 
 of $\epsilon$. Let us take $\lambda\geq e^{\frac{|x-\bar{x}|^2}{2}}$ for every $x\in B(\bar{x},\bar{r})$ so that $u_\epsilon\leq u$. Straightforward computations yield
\[
(u_\epsilon)_{x_i}=u_{x_i}+\epsilon(x_i-\bar{x})e^{\frac{|x-\bar{x}|^2}{2}}
\]
and 
\[
(u_\epsilon)_{x_ix_j}=u_{x_ix_j}+\epsilon(\delta_{ij}+(x_i-\bar{x})(x_j-\bar{x}))e^{\frac{|x-\bar{x}|^2}{2}}
\]
so that
\[
D^2u_\epsilon=D^2u+\epsilon(I+(x-\bar{x})\otimes(x-\bar{x}))e^{\frac{|x-\bar{x}|^2}{2}}\geq D^2u+\epsilon e^{\frac{|x-\bar{x}|^2}{2}}I
\]
Since $F$ is proper and $u_\epsilon\leq u$, one obtains
\[
F(x,u_\epsilon,Du_\epsilon,D^2u_{\epsilon})\leq F(x,u,Du+\epsilon(x-\bar{x})e^{\frac{|x-\bar{x}|^2}{2}},D^2u+\epsilon e^{\frac{|x-\bar{x}|^2}{2}}I)
\]
Combining \eqref{Lip-p} and \eqref{ell.ty}, one immediately gets
\begin{multline*}
F(x,u,Du+\epsilon(x-\bar{x})e^{\frac{|x-\bar{x}|^2}{2}},D^2u+\epsilon e^{\frac{|x-\bar{x}|^2}{2}}I)\leq F(x,u,Du,D^2u)\\
+\epsilon e^{\frac{|x-\bar{x}|^2}{2}}(L_K|x-\bar{x}|-\eta(x))
\end{multline*}
Using that $u$ is a subsolution and $x\in B(\bar{x},\bar{r})$, by the above choice of $\bar{r}$ we conclude
\[
F(x,u_\epsilon,Du_\epsilon,D^2u_{\epsilon})\leq -\epsilon e^{\frac{|x-\bar{x}|^2}{2}}\delta=:\alpha(x)\ ,
\]
as desired.
\end{proof}

\begin{cor}
\label{WCP}
Under the assumptions of Theorem \ref{SCP} and for bounded $\Omega$, if $u\in \USC(\overline
\Om)$ and  $v\in \LSC(\overline
\Om)$ are, respectively, a sub- and a supersolution of  \eqref{hjb_inf} such that $u\leq v$ in $\partial\Om$, then $u\leq v$ in $\overline\Om$. Moreover, if $u(x)=v(x)$ for some $x\in\Omega$ then $u \equiv v$.
\end{cor}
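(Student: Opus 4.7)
The plan is to deduce Corollary \ref{WCP} from Theorem \ref{SCP} in two short steps, using only the compactness of $\overline\Omega$, the upper semicontinuity of $u-v$, and the boundary inequality.

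\textbf{Step 1 (weak comparison).} Since $\overline\Omega$ is compact and $u-v\in\USC(\overline\Omega)$, the function $u-v$ attains its maximum $M:=\max_{\overline\Omega}(u-v)$. I argue by contradiction and assume $M>0$. By the boundary hypothesis $u\leq v$ on $\partial\Omega$, we have $u-v\leq 0$ on $\partial\Omega$, so the maximum $M>0$ cannot be realized on $\partial\Omega$; hence there exists $x_0\in\Omega$ with $(u-v)(x_0)=M>0$. All the hypotheses of Theorem \ref{SCP} are then met: $u\in\USC(\Omega)$, $v\in\LSC(\Omega)$, $u$ and $v$ are sub- and supersolutions of \eqref{hjb_inf}, and $u-v$ attains a nonnegative maximum at the interior point $x_0$. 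The theorem thus yields a constant $c$ with $u\equiv v+c$ on $\Omega$, and necessarily $c=M>0$. By upper/lower semicontinuity up to $\partial\Omega$, passing to the $\limsup$ of $u-v=c$ along sequences approaching the boundary gives $u-v\geq c>0$ on $\partial\Omega$, contradicting $u\leq v$ there. Therefore $M\leq 0$, i.e., $u\leq v$ on $\overline\Omega$.

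\textbf{Step 2 (identity on interior contact).} Assume now that $u(x)=v(x)$ for some $x\in\Omega$. By Step 1 we know $(u-v)(y)\leq 0$ for every $y\in\overline\Omega$, while $(u-v)(x)=0$. Thus $x$ is an interior maximum point of $u-v$ with maximum value $0$, which is nonnegative. Theorem \ref{SCP} then provides a constant $c$ with $u\equiv v+c$ on $\Omega$, and evaluation at $x$ forces $c=0$. Hence $u\equiv v$ on $\Omega$, and by semicontinuity up to the boundary (together with $u\leq v$ on $\partial\Omega$ from the hypothesis and the reverse inequality inherited from interior values), $u\equiv v$ on $\overline\Omega$.

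\textbf{Main obstacle.} There is no real obstacle here: both claims are essentially bookkeeping on top of Theorem \ref{SCP}. The only subtle point is the passage from interior equality ($u\equiv v+c$ on $\Omega$) to a boundary inequality in Step 1, which relies on the semicontinuity hypotheses $u\in\USC(\overline\Omega)$ and $v\in\LSC(\overline\Omega)$ to propagate $u-v=c$ up to $\partial\Omega$ as $\limsup(u-v)\geq c$, and then contradict the boundary datum. Everything else is an immediate application of the strong comparison principle just proved.
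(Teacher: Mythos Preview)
Your proof is correct and follows essentially the same route as the paper's: reduce to an interior nonnegative maximum of $u-v$, invoke Theorem \ref{SCP} to get $u\equiv v+c$ on $\Omega$, and use the semicontinuity bound $c\leq\limsup_{x\to y}(u(x)-v(x))\leq u(y)-v(y)\leq 0$ at boundary points to force $c\leq 0$. The paper organizes the cases slightly differently (treating ``max negative or attained on $\partial\Omega$'' directly rather than by contradiction), but the substance is identical.
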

\begin{proof}
If $\max_{\bar\Omega} (u-v)$ is negative or attained on $\partial \Omega$ the first conclusion is achieved. Otherwise we can apply Theorem \ref{SCP} and get $u(x)-v(x)=k$ for all $x\in \Omega$. Then, for $y\in\partial\Omega$,
\[
k\leq \limsup_{x\to y} (u(x)-v(x))\leq u(y)-v(y)\leq 0 ,
\]
which gives $u\leq v$. Then the last statement follows from Theorem \ref{SCP}.
\end{proof}
\begin{rem} 
The last two results hold also for the equation \eqref{hjb_sup} with convex instead of concave operator. In fact $z=v-u$ is a supersolution of $F_s(x,z,Dz,D^2z)=0$ and we apply the Strong Minimum Principle of Corollary \ref{smp-hjb} {\it ii)} to this equation.
\end{rem} 
\begin{ex}
Theorem \ref{SCP} and Corollary \ref{WCP} apply to the quasilinear equations 
\begin{equation*}
-
\Tr (A(x)(D_{\mathcal{X}}^2u)^*)+ H(x,u,Du)= 0 ,
\end{equation*}
where either $ H=H_i$ or $ H=H_s$ with
\begin{equation*}
H_i(x,r,p):=\inf_\al \{ - b^\al(x)\cdot p + c^\al(x) r - f^\al(x) \}  ,
\end{equation*}
\begin{equation*}
H_s(x,r,p):=\sup_\al \{ - b^\al(x)\cdot p + c^\al(x) r - f^\al(x) \}  ,
\end{equation*}
 the vector fields are $\mathcal{X}=(Z_1,...,Z_m)$,
and the coefficients $A, b^\al, c^\al, f^\al$ satisfy \eqref{reg-coeff1} and \eqref{reg-coeff2}. Also the weak Comparison principle, i.e., the first statement of Corollary \ref{WCP}, is new for these equations, since the results of 
\cite{BM} cover 
either  the case of a Hamiltonian $H$ depending only on the horizontal gradient $D_{\mathcal{X}}u$, or the case where the Lipschitz constant of $H$ w.r.t. $p$ and the diameter of $\overline\Omega$ are  small compared to $\min_{\overline\Omega}\sum_i |Z_i|^2/m$
 (however, in \cite{BM} 
 $H$ 
 is not necessarily concave or convex in $p$).
\end{ex}
\begin{ex}
All the statements of the previous example hold word by word also for the fully nonlinear equations 
\begin{equation*}
\mathcal{M}^{-
}((D_{\mathcal{X}}^2u)^*) + H_i(x,u, Du) 
= 0 ,
\end{equation*}
\begin{equation*}
\mathcal{M}^{+
}((D_{\mathcal{X}}^2u)^*) + H_s(x,u, Du) 
 = 0 .
\end{equation*}
\end{ex}

%
%
%
%
%
%
\end{document}